\theoremstyle{definition}
\newtheorem{definition}[subsection]{Definition}
\theoremstyle{plain}
\newtheorem{theorem}[subsection]{Theorem}
\newtheorem{proposition}[subsection]{Proposition}
\newtheorem{corollary}[subsection]{Corollary}
\newtheorem{lemma}[subsection]{Lemma}
\DeclareSymbolFont{bbold}{U}{bbold}{m}{n}
\DeclareSymbolFontAlphabet{\mathbbold}{bbold}
\newcommand\bbone{\mathbbold{1}}
\title{Representing interpolated free group factors \\ 
as group factors}
\author{Sorin Popa and Dimitri Shlyakhtenko}
\email{popa@math.ucla.edu, shlyakht@math.ucla.edu}
\address{Department of Mathematics, UCLA, Los Angeles, CA 90095}
\thanks{Research supported by NSF grants DMS-1700344 and DMS-1500035}
\begin{document}
\begin{abstract} We construct a one parameter family of ICC groups $\{G_t\}_{t>1}$, with the property that 
the group  factor $L(G_t)$ is isomorphic to the interpolated free group factor $L(\mathbb F_t):=L(\mathbb{ F}_2)^{1/\sqrt{t-1}}$, $\forall t$. Moreover, the groups $G_t$ have fixed cost $t$, are strongly treeable and freely generate any treeable ergodic equivalence relation of same cost.  
\end{abstract}
\maketitle

\section{Introduction.}

Groups and their measure preserving actions on probability spaces 
give rise to two remarkable classes of II$_1$ factors: the {\it group factors}, 
obtained from the left regular representation 
of infinite conjugacy class (ICC) groups $G$ and denoted $L(G)$ ([MvN43]); the {\it group measure space factors}, 
constructed from free ergodic pmp actions of countable groups   
$G \curvearrowright X$, denoted $L^\infty(X)\rtimes G$ ([MvN36]). 

Another important source of II$_1$ factors is the  {\it amplification} $M^t$ of a given II$_1$ factor $M$ by positive real numbers $t>0$  
([MvN43]). Amplifications satisfy $(M^t)^s=M^{ts}$,  and if $t$ is an integer then $M^t$ is the algebra of $t \times t$ matrices with entries in $M$, 
while if $t\leq 1$ then $M^t \simeq pMp$, with $p$ a projection of trace $t$ in $M$. 

There has been much effort in 
understanding the structure and classification of these factors in terms of their ``building data''. 
A significant part of this work concentrated  on W$^*$-{\it rigidity} phenomena, 
which aim at recovering $G$, $G\curvearrowright X$, or the amplifying $t>0$, from  the isomorphism class of the associated II$_1$ factor 
(i.e., its W$^*$-{\it equivalence} class), 
or proving unique decomposition,  indecomposability and non-embeddability results (see e.g., \cite{popa:strongrig1}).  

At the opposite end, establishing isomorphism (or mere embedability) between II$_1$ factors arising from different groups (resp. group actions) 
is equally interesting.   It has already been shown in ([MvN43]) 
that all II$_1$ factors of the form $L(G), L^\infty(X)\rtimes G$ with $G$ locally finite are isomorphic to the hyperfinite II$_1$ factor $R$, 
and that $R^t\simeq R$, $\forall t>0$. Connes fundamental theorem on the uniqueness of the amenable amenable II$_1$ factor ([C75]), shows  
that the same holds true for all amenable groups $G$ and that  all II$_1$ factors embeddable into $R$ are isomorphic to $R$, with  
$L(G)\hookrightarrow R$ iff $G$ is amenable.  
Thus, all amenable ICC groups $H_i$ are W$^*$-equivalent and if $G$ is any ICC group then 
$G \times H_i$ are W$^*$-equivalent as well.  

Free probability theory allowed to  deduce that any free product of two or more infinite amenable groups 
$*_{i\in I} H_i$ is W$^*$-equivalent to $\mathbb F_{|I|}$ (\cite{dykema:interpolated}).  It also allowed to represent amplifications  
of free group factors as group factors in unexpected ways:  
Voiculescu's amplification formula \cite{voiculescu:89} showed that $L(\mathbb F_n)^{1/k}\simeq L(\mathbb F_{k^2(n-1)+1})$, $\forall n, k\geq 2$, and  
Dykema  \cite{dykema:interpolateddef},  Radulescu \cite{radulescu:inv} 
extended this to show that $L(\mathbb F_n)^t=L(\mathbb F_m)^s$ whenever $(n-1)/t^2=(m-1)/s^2$. So all amplifications of factors from finite rank free groups 
are among the {\it interpolated free group factors} 
$L(\mathbb F_t):=L(\mathbb{ F}_2)^{1/\sqrt{t-1}}$. 

The same  methods made it possible to identify some of the interpolated free group factors $L(\mathbb F_t)$ 
as group factors  for non-integer $t$, showing for instance that $L(PSL(2,\mathbb Z))=L(\mathbb F_{7/6})$ (\cite{dykema:interpolated}, \cite{harpe-picou:1993}).  
However, the problem of representing all II$_1$ factors $L(\mathbb F_t)$ as group factors remained open. We solve this problem here by proving: 

\begin{theorem}  \label{mainthrm} For any $t > 1$ there exists a countable ICC group $G_t$ with the property that $L(G_t)$ is isomorphic to the Dykema-Radulescu 
interpolated free group factor $L(\mathbb{F}_t)$.   Thus, 
$L(G_t)$ satisfy the amplification formula $L(G_t)^s\simeq L(G_{1+(t-1)/s^2}$.  
The groups $G_t$ can be chosen to be strongly treeable of fixed cost $t$. Moreover,  given any free ergodic pmp action $G_t \curvearrowright X$ there exists a free action $G_{1+(t-1)/s} \curvearrowright Y$ such that $(L^\infty(X)\rtimes G_t)^s\simeq L^\infty(Y)\rtimes G_{1+(t-1)/s}$, $\forall t>1, s>0$. \end{theorem}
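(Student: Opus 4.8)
The plan is to build $G_t$ so that a single real parameter simultaneously controls its free dimension (which governs the isomorphism class of $L(G_t)$) and its cost (which governs the measured group theory), the guiding heuristic being the coincidence, already visible in $L(\mathrm{PSL}(2,\mathbb{Z}))=L(\mathbb{F}_{7/6})$, between the free-group parameter $t$ and the quantity $1+\beta_1^{(2)}(G)=7/6$. I would therefore take $G_t$ to be a graph of groups with amenable vertex and edge groups --- concretely an amalgamated free product (or HNN extension) of infinite amenable groups over a common infinite amenable subgroup $\Sigma$ --- chosen so that the attendant von Neumann algebra inclusions carry the continuously varying relative weights that Dykema and Radulescu exploit to interpolate. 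For rational $t$ the free product of cyclic groups $\mathbb{Z}_{n_1}\ast\cdots\ast\mathbb{Z}_{n_k}$ already realizes every value $t=k-\sum_i 1/n_i$; to reach arbitrary real $t>1$ I would amalgamate over a diffuse amenable (Cartan-type) subalgebra, where the index/weight data supplying the free dimension becomes a genuine continuous parameter, using if necessary an inductive limit or a measured graph-of-groups model.

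The first main step is to prove $L(G_t)\cong L(\mathbb{F}_t)$. Writing $L(G_t)$ as the corresponding amalgamated free product $\ast_{L(\Sigma)} L(G_i)$ of hyperfinite algebras over the hyperfinite subalgebra $L(\Sigma)$, I would invoke the free-dimension calculus of \cite{dykema:interpolateddef}, \cite{radulescu:inv}, \cite{dykema:interpolated}: free dimension is additive across such free products, amenable pieces contribute their free dimension, and the resulting factor is the interpolated free group factor whose parameter equals the total free dimension, arranged to be $t$. The amplification formula $L(G_t)^s\cong L(G_{1+(t-1)/s^2})$ is then immediate from $L(G_t)=L(\mathbb{F}_t)$ together with the Voiculescu--Dykema--Radulescu relation $L(\mathbb{F}_t)^s=L(\mathbb{F}_{1+(t-1)/s^2})$, the quadratic dependence on $s$ reflecting that free dimension is quadratic under amplification.

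For the measured assertions I would argue on the equivalence-relation side. Since $G_t$ is a graph of amenable groups, Bass--Serre theory furnishes an action on a tree with amenable stabilizers, from which strong treeability of every free pmp action follows; Gaboriau's cost formulas for amalgamated products and HNN extensions over amenable subgroups then give fixed price, normalized to $\mathrm{cost}(G_t)=1+\beta_1^{(2)}(G_t)=t$. That $G_t$ freely generates every treeable ergodic relation of cost $t$ I would deduce from Hjorth's realization theorem for treeable relations together with this fixed-price computation, matching a prescribed treeing of $\mathcal{R}$ to the Bass--Serre treeing of $G_t$. The group-measure-space statement then combines these ingredients: for a free ergodic action $G_t\curvearrowright X$ the relation $\mathcal{R}=\mathcal{R}_{G_t\curvearrowright X}$ is treeable of cost $t$, its $s$-amplification $\mathcal{R}^s$ is treeable of cost $1+(t-1)/s$ by Gaboriau's compression formula (linear in $s$), and universality realizes $\mathcal{R}^s$ as $\mathcal{R}_{G_{1+(t-1)/s}\curvearrowright Y}$; applying $L(\mathcal{R}^s)=L(\mathcal{R})^s=(L^\infty(X)\rtimes G_t)^s$ yields the claimed isomorphism.

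The step I expect to be the main obstacle is the uniform construction realizing every real $t>1$ while keeping both invariants under exact control: discrete group data naturally produces only a dense (rational) set of free dimensions and costs, so the crux is to manufacture the continuous interpolation parameter inside a genuine countable group --- via amalgamation over a diffuse amenable subalgebra or a measured groupoid model --- and then to verify that the very same parameter is read off simultaneously by the Dykema--Radulescu free dimension and by the Gaboriau cost. Establishing that these two a priori unrelated computations return the identical value $t$, for all $t$ at once, is the technical heart of the argument.
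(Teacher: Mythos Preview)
Your overall architecture is right --- build $G_t$ as a graph of groups so that a single parameter governs both free dimension and cost --- and you correctly locate the crux in realizing every real $t>1$ by genuine discrete-group data. But your proposed mechanisms for that step do not work, and this is exactly where the paper's construction diverges from yours.

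You suggest amalgamating over a \emph{single infinite} amenable subgroup $\Sigma$, or passing to a diffuse Cartan-type subalgebra or a measured groupoid, to obtain a continuous parameter. None of these produces a countable group carrying a freely varying real invariant: amalgamating two amenable groups over a fixed infinite amenable $\Sigma$ gives cost $1$ and no adjustable free dimension; a ``measured groupoid model'' leaves the category of countable groups; and Dykema's free-dimension additivity over a diffuse amenable $L(\Sigma)$ is not available in the form you invoke. The paper instead uses an \emph{increasing tower of finite groups} $\{e\}=Q_0\subset Q_1\subset\cdots$ with ICC union (e.g.\ $Q_n=S_n$), and defines $G$ as the inductive limit of $G_{n+1}=G_n *_{Q_n}(Q_{n+1}\times\mathbb{F}_{k_{n+1}})$. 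The invariant is then $t=1+\sum_n k_n\,|Q_n|^{-1}$, and because $|Q_n|^{-1}$ can be made summable with arbitrarily fast decay (e.g.\ $1/n!$), choosing the \emph{integers} $k_n$ realizes every real $t>1$ --- essentially a factorial-base expansion. This is the missing idea: the continuous parameter comes not from a diffuse edge group but from an infinite sequence of finite amalgamations with rapidly shrinking weights.

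On the von Neumann side, the paper does not apply Dykema's free-dimension calculus directly to the amalgamated free product; the amalgamation is over finite-dimensional but growing algebras, and one must pass to the limit. Instead it identifies $L(G)$ with $W^*(R,\{X_{ij}\})$ where $R=L(\bigcup Q_n)$ is the hyperfinite II$_1$ factor and the $X_{ij}$ are $R$-valued semicircular variables with covariances given by conditional expectations onto $L(Q_i)$. The isomorphism class of this algebra depends only on a Jordan $R$-bimodule whose dimension is computed as $\sum_n k_n\,b_0(L(Q_n))=\sum_n k_n|Q_n|^{-1}$, reducing to the known model of $L(\mathbb{F}_t)$. Your outline of the measured side (strong treeability via the graph-of-groups structure, Gaboriau's cost formulas, Hjorth's realization argument, and the compression formula for cost) matches the paper's Section~3 closely; the same quantity $1+\sum k_n|Q_n|^{-1}$ appears as the cost, which is why the two invariants agree.
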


The {\it strong treeability} and cost of the groups $G_t$ are  in the sense of \cite{gaboriau:cost}. In other words, the groups $G_t$ have the property that every free ergodic pmp action of $G_t \curvearrowright X$ has treeable orbits and cost $t$; in particular, the resulting equivalence relation satisfies  
$\mathcal R_{G_t}=(\mathcal R_{\mathbb F_2})^{1/\sqrt{t-1}}$,  for some free ergodic $\mathbb F_2\curvearrowright X$. The above result also shows that $G_t$ 
satisfy  another property, that we call {\it OE-freeness}: 
any treeable ergodic equivalence relation $R$ of cost $t$ is induced by an action of $G_t$. Thus, our groups $G_t$ are both W$^*$-{\it free} (a terminology we'll use to designate an ICC 
group whose  II$_1$ factor is isomorphic to an interpolated free group factor) and OE-free.  

Examples of OE-free groups $H_t$ with cost $t$, providing representations 
of $(\mathcal R_{\mathbb F_2})^{1/\sqrt{t-1}}$ as  $\mathcal R_{H_t}$, $\forall t>1$, 
have already been constructed by Hjorth in  \cite{hjorth:costAttained}. 
Our groups $G_t$ are in fact 
a modified version of the $H_t$ in \cite{hjorth:costAttained}, being obtained by repeated amalgamated free products with groups of the form $Q_n \times \mathbb F_{k_n}$,  
where $Q_n$ is an increasing sequence of finite groups such that $\cup_n Q_n$ is ICC,  the amalgamation is over $Q_{n-1}\subset Q_n$, and $k_n\geq 2$ are integers of appropriate size.

The proof that $L(G_t)\simeq L(\mathbb F_t)$, which we do in Section 2, utilizes operator valued semicircular systems, that enables the identification of such amalgamated free products of groups  
with appropriate models of the interpolated free group factors in (\cite{shlyakht:amalg,shlyakht:Avalued}, \cite{dykema:interpolated,dykema:interpolateddef,radulescu:inv}). 
The proof of the treeability, which we do in Section 3, is a slight extension of the argument in \cite{hjorth:costAttained}. 
Section 4 contains a number of remarks and open problems, and lists several properties of groups $G_t$, such as 
Cartan rigidity (\cite{popa-vaes:F2}) and  the universal vanishing cohomology property $\mathcal V\mathcal C$ 
(\cite{popa:VC}). 

\subsection*{Acknowledgements} This work was carried out in part during the ``Quantitive Linear Algebra'' program at the Institute for Pure and Applied Mathematics.  S.P. would like to acknowledge support from IPAM and the Simons Foundation as the Simons Participant.

\section{A family of groups generating interpolated free group factors.}

In this section we begin by introducing a certain family of groups $G=G(\{Q_i\},\{k_i\})$ and study their $W^*$-equivalence class. The main result of this section states that, under certain assumptions, the group von Neumann algebra of such a group $G$ is isomorphic to an interpolated free group factor, a property that we call W$^*$-{\it freeness}.  To prove this result, we first give a description of this group von Neumann algebra in terms of operator-valued semicircular systems.  

\begin{definition} \label{sect:construct}
Suppose that $\{e\}=Q_0 \subset Q_1 \subset Q_2 \subset \cdots$ is an increasing sequence of finite groups (so that $|Q_n|\to\infty$), and suppose that $k_0, k_1,\dots$ is a sequence of non-negative integers.   Let $H_n = Q_n \times \mathbb{F}_{k_n}$.  
Define $G_n$ recursively by:  $G_0 = H_0$, $G_{n+1} = G_n *_{Q_n} H_{n+1}.$  Finally, define  $G=G(\{Q_i\},\{k_i\})$ by $G = \bigcup_n G_n.$  
\end{definition}

The construction of the  groups $G(\{Q_i\},\{k_i\})$ is a slight generalization of the construction considered by G. Hjorth \cite{hjorth:costAttained}, which correspond in our notation to the groups 
 $G(\{\mathbb{Z}/2\mathbb{Z})^n\},\{k_n\})$.

To fix notations, let  $A_n = L(Q_n)$ and  $C_n = L(G_n)$ (taken with their group traces).  Then $C_0 \cong L(\mathbb{F}_{k_0})$ and $C_{n+1} \cong C_n *_{A_n} (A_{n+1} \otimes L(\mathbb{F}_{k_{n+1}}))$.  Let $C=W^*\left(\bigcup_n C_n \right)$; then one easily sees that $C=L(G)$. 

We now give a brief review of operator valued semicircular systems; the reader is referred to \cite{shlyakht:amalg,shlyakht:Avalued} for more details and proofs.

\subsection{Operator-valued free semicircular variables}\label{sect:OA-valued}

A family of {\em $D$-valued semicircular operators with covariances $\eta_1,\dots,\eta_k$} is associated to a family of completely positive maps $\eta_j : D\to D$ defined on a tracial von Neumann algebra $(D,\tau_D)$ and satisfying the self-adjointedness condition  $\tau_D(x\eta_j(y))=\tau_D(\eta_j(x)y)$ for all $x,y\in D$. This family of operators has the following properties:

(a) The von Neumann algebra $M:=W^*(D,X_1,\dots,X_k)$ admits a trace $\tau$ extending $\tau_D$.  Moreover, there exists a $D$-valued conditional expectation $E:M\to D$ so that $\tau_D\circ E=\tau$ and so that
$$
E(X_i y X_j)=\delta_{i=j} \eta_j(y),\qquad \forall y\in D.
$$

(b) For $j=1,\dots,k_n$, the algebras $W^*(D,X_j)$ are free with amalgamation over $D$.

(c) If $\eta_j(1)=1$, then $(W^*(X_j),\tau)\cong (L^\infty[-\sqrt{2},\sqrt{2}], \mu)$  where $\mu$ corresponds to integration with respect to the semicircular measure $\pi^{-1} \sqrt{2-t^2} dt$.

(d) More generally, suppose that for some unital subalgebra $D_0\subset D$, $\eta_j(D_0)\subset D_0$ for each $j$. Then $X_1,\dots,X_n$ also form a $D_0$-valued semicircular system with covariances $\eta_j' = \eta_j \Big|_{D_0}$.

(e) Suppose that $D_0\subset D_1 \subset D$ are a unital subalgebras and suppose that $\eta_j:D\to D $ is the $\tau_D$-preserving conditional expectation onto $D_0$.  Then $X_j$ is free from $D$ with amalgamation over $D_1$.  Furthermore, $W^*(D_0,X_j) =  D_0 \otimes W^*(X_j)$.

(f) Given $\eta_i$, consider the $D,D$-bimodule $H_i$ obtained from $D\otimes D$ after separation and completion with respect to the inner product $\langle a'\otimes b', a\otimes b\rangle = \tau (b'\eta_i (a'a^*)b^*)$, and denote by $K_i\subset H_i$ the closure of the real subspace $\{\sum a_i \otimes b_i^* +b_i\otimes a_i^*:a_i, b_i\in D\}$.  Then $K_i$ is a Jordan bimodule over $D$: for any $a,b\in D$ and $h\in K_i$, $ahb + b^* h a^* \in K_i$.  
The von Neumann algebra $W^*(D,X_1,X_2,\dots)$ depends up to isomorphism only on the isomorphism class of the Jordan subbimodule $\bigoplus_i K_i \subset \bigoplus_i H_i$.

\subsection{A description of the algebra $C_n$}
Returning now to the algebras $C_n$, for $j\leq n$ let  $E^n_j : A_n \to A_n$ be the trace-preserving conditional expectation onto $A_j \subset A_n$.  In particular, $E^n_0$ is the trace on $A_n$.  Furthermore, let $E_n : A \to A_n$ be the trace-preserving conditional expectation. 

\begin{lemma} For each $i\in \{1,\dots,n\}$ and each $j\in \{1,\dots,k_n\}$ let $\eta_{ij}=E_i$.  Let 
$\{ X_{ij} : i\in  \{1,\dots,n\}, j\in \{1,\dots,k_n\}\}$ be a family of $A$-valued semicircular variables with covariance $\eta_{ij}$.   Then  $C_n = W^*(A_n, X_{ij} : 0\leq i\leq n, 0\leq j\leq k_n)$.  
\end{lemma}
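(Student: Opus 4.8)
The plan is to argue by induction on $n$, peeling off the top level of generators so as to reproduce the recursion $C_{n} = C_{n-1} *_{A_{n-1}} (A_{n} \otimes L(\mathbb{F}_{k_n}))$. Throughout I write $E_A : \widetilde M \to A$ for the $A$-valued conditional expectation of property (a) on the ambient algebra $\widetilde M := W^*(A, X_{ij} : i\le n)$, I take the level-$i$ generators $X_{ij}$ (with $1\le j\le k_i$) to have covariance $\eta_{ij} = E_i$, and I set $M_n := W^*(A_n, X_{ij} : i\le n)$ and $N := W^*(A_n, X_{nj} : 1\le j\le k_n)$. The goal is to show $M_n\cong C_n$. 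The base case $n=0$ is immediate: since $A_0=\mathbb{C}$ the generators $X_{0j}$ are scalar-valued ($\eta_{0j}=E_0=\tau$), so by (b) and (c) they form a free semicircular family and $M_0 = W^*(X_{0j})\cong L(\mathbb{F}_{k_0}) = C_0$.

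The first substantive step is to identify the level-$n$ factor $N\cong A_n\otimes L(\mathbb{F}_{k_n})$. Each $X_{nj}$ has covariance $\eta_{nj}=E_n$, the $\tau$-preserving conditional expectation onto $A_n$, so property (e) (with $D_0=D_1=A_n$) gives $W^*(A_n,X_{nj}) = A_n\otimes W^*(X_{nj})$, while property (c), applicable since $E_n(1)=1$, identifies each $W^*(X_{nj})$ with a single semicircular generator $L^\infty[-\sqrt2,\sqrt2]$. Properties (b) and (e) together show that the $k_n$ algebras $W^*(A_n,X_{nj})$ are free with amalgamation over $A_n$, and forming their amalgamated free product and factoring out the common $A_n$ yields $N\cong A_n\otimes L(\mathbb{F}_{k_n})$.

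The crux is the freeness bookkeeping needed to splice $N$ onto $M_{n-1}$. I would record three facts, all consequences of (b) and (e). Writing $P = W^*(A, X_{ij}: i\le n-1)$ and $Q = W^*(A, X_{nj})$: first, $P$ and $Q$ are free over $A$ by (b), so $\widetilde M = P *_A Q$; second, applying (e) to each $X_{ij}$, $i\le n-1$, with the \emph{intermediate} subalgebra $D_1 = A_{n-1}$ (legitimate because $A_i\subseteq A_{n-1}\subseteq A$), each such $X_{ij}$ is free from $A$ over $A_{n-1}$, whence $M_{n-1}$ is free from $A$ over $A_{n-1}$ and $P = A *_{A_{n-1}} M_{n-1}$; third, applying (e) to each $X_{nj}$ with $D_0=D_1=A_n$, $N$ is free from $A$ over $A_n$ and $Q = A *_{A_n} N$. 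Substituting the last two into $\widetilde M = P *_A Q$ and collapsing the two copies of $A$ presents $\widetilde M$ as the chain amalgamated free product $M_{n-1} *_{A_{n-1}} A *_{A_n} N$. Since $A_{n-1}\subseteq A_n\subseteq N$, the subalgebra $M_n = W^*(A_n, X_{ij}:i\le n) = W^*(M_{n-1},N)$ is, by associativity of amalgamated free products (using $A_n *_{A_n} N = N$), exactly $M_{n-1} *_{A_{n-1}} N$. Invoking the inductive hypothesis $M_{n-1}\cong C_{n-1}$ (compatibly with $A_{n-1}\mapsto A_{n-1}$) and the identification of $N$ from the previous paragraph gives $M_n\cong C_{n-1} *_{A_{n-1}} (A_n\otimes L(\mathbb{F}_{k_n})) = C_n$, completing the induction.

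The main obstacle is the asymmetry in the second and third freeness facts: the level-$n$ piece $N$ is naturally free over $A_n$, whereas the amalgam in the target recursion is the smaller $A_{n-1}$, so one cannot simply assert freeness of $M_{n-1}$ and $N$ over $A_{n-1}$ at face value. The chain-product/associativity step is exactly what reconciles this, and it is the place to be careful that $E_{A_{n-1}}$ and $E_{A_n}$ interact as expected (i.e.\ $E_{A_{n-1}} = E_{A_{n-1}}\circ E_{A_n}$ on $N$, so the inner amalgam $A_n$ is absorbed). I would also verify that the isomorphisms produced at each stage respect the inclusions $A_{n-1}\subseteq A_n$ and the embedding $M_{n-1}\hookrightarrow M_n$, so that the induction is genuinely compatible rather than merely an abstract isomorphism at each level.
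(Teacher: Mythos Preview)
Your proof follows the same inductive skeleton as the paper's---reproduce the recursion $C_n = C_{n-1} *_{A_{n-1}} (A_n\otimes L(\mathbb F_{k_n}))$ by identifying the top-level piece and showing the correct amalgamated free product structure---but the execution differs in one notable way. The paper invokes property (d) at the outset to replace the $A$-valued system by an $A_{n}$-valued (or $A_{n+1}$-valued) one, so that all freeness statements from (b) and (e) are immediately over the correct finite-dimensional algebra; the recursion then drops out in two lines. You instead keep everything $A$-valued, decompose $\widetilde M = P *_A Q$, and then extract $M_n$ via the chain $M_{n-1}*_{A_{n-1}} A *_{A_n} N$. This works, but carrying $A$ around forces extra associativity bookkeeping that (d) lets the paper avoid entirely.

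There is one step in your argument that, as written, is not quite justified. When you apply (e) to each $X_{ij}$ with $i\le n-1$ and conclude ``whence $M_{n-1}$ is free from $A$ over $A_{n-1}$'': property (e) gives you that each \emph{individual} $W^*(A_{n-1}, X_{ij})$ is free from $A$ over $A_{n-1}$, but passing from this to freeness of the entire algebra $M_{n-1}=W^*(A_{n-1}, X_{ij}:i\le n-1)$ is not automatic---you need to know that the various $X_{ij}$ are also free \emph{from each other} over $A_{n-1}$, not just over $A$. The cleanest fix is precisely property (d): since each $\eta_{ij}=E_i$ preserves $A_{n-1}$, the family $\{X_{ij}:i\le n-1\}$ is also an $A_{n-1}$-valued semicircular system, and (b) at that level gives the needed mutual freeness over $A_{n-1}$. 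The same remark applies to your identification of $N$ (freeness of the $W^*(A_n,X_{nj})$ over $A_n$, not just over $A$). Once you insert (d) at these two spots your argument is complete, and in fact collapses to essentially the paper's proof.
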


\begin{proof}
By \S\ref{sect:OA-valued}d, since $\eta_{ij}(A_n)\subset A_n$, for each fixed $i$, $X_{ij}$ are actually $A_n$-valued semicircular variables with covariances $E^n_j$.    

We now proceed by induction.  If $n=0$, then by \S\ref{sect:OA-valued}b, $X_{01},\dots,X_{0k_0}$ are free (with amalgamation over $\mathbb{C}1$) and since by \S\ref{sect:OA-valued}c, $W^*(X_{0j})\cong L(\mathbb{Z})$, this $k_0$-tuple  generates $C_0\cong L(\mathbb{F}_{k_0})$.  

Supposing that $W^*(A_n, X_{ij} : 0\leq i\leq n, 0\leq j\leq k_n)\cong C_n$, we note that because if $k\leq n$ 
$E^{n+1}_k$ is the conditional expectation onto $A_{k}\subset A_{n+1}$ we deduce from \S\ref{sect:OA-valued}e that $X_{ij}$ for $i\leq n$ are free with amalgamation over $A_n$ from $A_{n+1}$.  Moreover, by \S\ref{sect:OA-valued}b, $\{X_{(n+1) j}: 1\leq j\leq k_{n+1}\}$ are free from $W^*(A_{n+1}, X_{ij} : 0\leq i\leq n, 0\leq j\leq k_n)\cong C_n$ over $A_{n+1}$.  Thus by the inductive assumption and by \S\ref{sect:OA-valued}e,
\begin{multline*}
W^*(A_n, X_{ij} : 0\leq i\leq n+1, 0\leq j\leq k_n) \\
\cong W^*(A_n, X_{ij} : 0\leq i\leq n, 0\leq j\leq k_n)*_{A_n} 
W^*(A_{n+1},X_{(n+1) j}: 1\leq j\leq k_{n+1}) \\
 \cong C_n *_{A_n} (A_{n+1}\otimes L(\mathbb{F}_{k_{n+1}}))
 = C_{n+1}.
\end{multline*}  This concludes the proof. 
\end{proof}

\begin{corollary}  $C\cong W^*(A, X_{ij} : i\in \mathbb{N}\cup\{0\}, 0\leq j\leq k_n)$.
\end{corollary}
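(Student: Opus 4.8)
The plan is to pass to the limit $n\to\infty$ in the Lemma, using that both the algebras $C_n$ and their semicircular models form compatible increasing chains whose weak closures are $C$ and the asserted algebra, respectively. Write $M:=W^*(A,X_{ij}:i\in\mathbb{N}\cup\{0\})$ for the algebra on the right-hand side, and for each $n$ set $M_n:=W^*(A_n,X_{ij}:0\le i\le n)\subseteq M$. By the Lemma each $M_n$ is isomorphic to $C_n=L(G_n)$ via a trace-preserving $*$-isomorphism $\phi_n$, and since $A_n\subseteq A_{n+1}$ and the generators at level $n$ are a subset of those at level $n+1$, we have $M_n\subseteq M_{n+1}$.

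First I would check that the isomorphisms $\phi_n$ are compatible with the inclusions $M_n\subseteq M_{n+1}$ and $C_n\subseteq C_{n+1}$, i.e. that $\phi_{n+1}|_{M_n}=\phi_n$. This is essentially built into the inductive proof of the Lemma: the identification $M_{n+1}\cong C_n*_{A_n}(A_{n+1}\otimes L(\mathbb{F}_{k_{n+1}}))=C_{n+1}$ is obtained by identifying the first amalgamated free factor with $M_n\cong C_n$ through the inductive hypothesis, so the resulting isomorphism restricts to $\phi_n$ on $M_n$. Granting this, the $\phi_n$ glue to a single trace-preserving $*$-isomorphism $\phi:\bigcup_n M_n\to\bigcup_n C_n$ of the increasing unions.

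Next I would extend $\phi$ to the weak closures. Because $\phi$ preserves the traces, it implements a unitary between the associated $L^2$-spaces intertwining the left regular representations, so conjugation by this unitary carries $\overline{\bigcup_n M_n}^{\,w}$ onto $\overline{\bigcup_n C_n}^{\,w}$; the latter is exactly $C=W^*(\bigcup_n C_n)=L(G)$. It then remains only to identify the source: I claim $\overline{\bigcup_n M_n}^{\,w}=M$. Indeed, $\bigcup_n M_n$ contains every $X_{ij}$ (since $X_{ij}\in M_i$) as well as $\bigcup_n A_n$, whose weak closure is $A$ because $A=L(\bigcup_n Q_n)$ and $A_n=L(Q_n)$; hence $\overline{\bigcup_n M_n}^{\,w}$ contains all the generators of $M$, while the reverse inclusion is immediate from $M_n\subseteq M$. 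Combining these facts gives $M\cong C$, as desired.

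The step I expect to require the most care is the verification of the compatibility $\phi_{n+1}|_{M_n}=\phi_n$; everything else is a routine inductive-limit argument for tracial von Neumann algebras. One should ensure that the amalgamated free product identifications are chosen functorially in $n$, i.e. that at each stage the ``old'' generators $X_{ij}$ with $i\le n$ and the copy of $A_n$ are sent to the corresponding elements of $C_{n+1}$ inherited from $C_n$, so that no reindexing or rechoosing of generators occurs when passing from level $n$ to level $n+1$. Once this bookkeeping is fixed, the conclusion follows formally.
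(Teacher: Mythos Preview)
Your argument is correct and is precisely the routine inductive-limit justification that the paper leaves implicit (the corollary is stated without proof, as immediate from the preceding Lemma). The compatibility $\phi_{n+1}|_{M_n}=\phi_n$ that you single out is indeed the only point requiring care, and it follows, as you note, because the inductive identification in the Lemma's proof is made over the previously constructed isomorphism $M_n\cong C_n$.
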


\subsection{Isomorphism class of $C$ when $Q$ is ICC}
Let $R$ be a II$_1$ factor and let $\tau$ be the unique trace on $R$ (for our purposes, we may assume that $R$ is the hyperfinite II$_1$ factor $R=L(Q)$).  Assume that $D\subset R$ is a unital finite-dimensional subalgebra.  Let $R_1$ be the Jones basic construction for $D\subset R$.  Denote by $H$ the $R,R$-bimodule $L^2(R)\otimes_D L^2(R)\cong L^2(R_1)$, and by $K$ the Jordan sub-bimodule  $K=\{x\in L^2(R_1):x=x^*\}$.  Because $D$ is finite-dimensional, $H\subset L^2(R)\otimes L^2(R)$. 
Since $L^2(R)\otimes_D L^2(R) = (L^2(R)\otimes L^2(R)) \otimes_{D'\otimes D'} L^2(D)$ we find from invariance of Murray-von Neumann dimension under induction that 
$$\dim_{L(R)\bar\otimes L(R)}H = \dim_{D\otimes D} L^2(D).$$ 
Let us write 
$$
b_0(D)=\dim_{D\otimes D} L^2(D).
$$  This is the zeroth $L^2$-Betti number of $D$ in the sense of \cite{cs:l2betti}.
For $D$ a matrix algebra of size $m\times m$, $b_0(D) = {m^{-2}}$. If $D = D_1 \oplus D_2\oplus \cdots \oplus D_n$ and if $p_i\in D_i$ denotes the unit of $D_i$, then $b_0(D) = \sum_i (\tau(p_i))^2 b_0(D_i)$.

\begin{proposition}  \label{prop:isomClassR} Suppose that $D_i \subset R$, $i\in \mathbb{N}\cup\{0\}$ are finite-dimensional subalgebras, and denote by $E_j$ the conditional expectation onto $D_j$.  Let $k_n \in \mathbb{N}$ be fixed, and suppose that $(\eta_{ij}: i\in \mathbb{N}\cup\{0\}, j\in\{1,\dots,k_n\})$ are completely positive maps given by $\eta_{ij} = E_i$.  Let $X_{ij}$ be the associated $R$-valued semicircular system. Then $$W^*(R,X_{ij}: i\in \mathbb{N}\cup\{0\},j\in \{1,\dots,k_n\})\cong L(\mathbb{F}_t)$$ with $t= 1 + \sum_n k_n b_0 (D_n)$.
\end{proposition}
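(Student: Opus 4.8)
\emph{The plan} is to exploit the Jordan-bimodule invariance of \S\ref{sect:OA-valued}(f): the isomorphism class of $M:=W^*(R,X_{ij})$ depends only on the isomorphism class of the Jordan subbimodule $K=\bigoplus_{ij}K_{ij}\subset H=\bigoplus_{ij}H_{ij}$ attached to the covariances $\eta_{ij}=E_i$. Thus the proof reduces to three steps: (1) identifying this bimodule, (2) computing its Murray--von Neumann dimension via $b_0$, and (3) matching it with a standard coarse model whose generated algebra is a known interpolated free group factor.

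First I would identify each $H_{ij}$. Since $\eta_{ij}=E_i$ is the trace-preserving conditional expectation onto the finite-dimensional $D_i$, the recipe of \S\ref{sect:OA-valued}(f) produces $H_{ij}\cong L^2(R)\otimes_{D_i}L^2(R)\cong L^2(R_1^{(i)})$, the $L^2$-space of the Jones basic construction $R_1^{(i)}$ for $D_i\subset R$, with $K_{ij}$ its self-adjoint part $\{x\in L^2(R_1^{(i)}):x=x^*\}$. The dimension computation preceding the statement then gives $\dim_{R\bar\otimes R}H_{ij}=b_0(D_i)$, so that the total bimodule has real dimension
\[
\dim_{R\bar\otimes R}H=\sum_{ij}b_0(D_i)=\sum_i k_i\,b_0(D_i),
\]
where for each level $i$ the index $j$ ranges over the $k_i$ variables attached to $D_i$; this is the quantity $\sum_n k_n b_0(D_n)$ appearing in $t$.

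Next I would match $K$ with a coarse model. Because $R$ is a II$_1$ factor, $R\bar\otimes R$ is a II$_1$ factor, over which bimodules, together with the canonical self-adjoint (Jordan) structure coming from a basic construction, are determined up to isomorphism by their dimension. Consequently $\bigoplus_{ij}K_{ij}$ is isomorphic, as a Jordan bimodule, to the symmetric bimodule of the same total dimension $D:=\sum_i k_i b_0(D_i)$ sitting inside the coarse bimodule $L^2(R)\otimes L^2(R)$. By \S\ref{sect:OA-valued}(f), $M$ is therefore isomorphic to the algebra generated by $R$ together with a free semicircular family of coarse covariances realizing dimension $D$. For such coarse models the parameter is additive: a single coarse semicircular variable adjoined freely to $R$ gives $R*L(\mathbb Z)\cong L(\mathbb F_2)$ by \S\ref{sect:OA-valued}(e) together with Dykema's free-product computation, and more generally a coarse family of total dimension $D$ yields $L(\mathbb F_{1+D})$. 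Invoking the free-product and amplification formulas of Voiculescu, Dykema and Radulescu (\cite{voiculescu:89,dykema:interpolated,dykema:interpolateddef,radulescu:inv}) together with the hyperfiniteness (free dimension $1$) of $R$, this identifies $M\cong L(\mathbb F_t)$ with $t=1+D=1+\sum_i k_i b_0(D_i)$.

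\emph{The main obstacle} I anticipate is concentrated in the matching step and is twofold. First, one must justify that it is the \emph{Jordan} bimodule $K$, and not merely the complex bimodule $H$, that is pinned down by the single real dimension $D$, i.e.\ that the self-adjoint symmetric structure carried by a basic-construction bimodule over the factor $R\bar\otimes R$ is unique for a given dimension; this is exactly where the hypotheses that $R$ is a factor and that the $D_i$ are finite-dimensional (so that $H_{ij}$ is quasi-contained in the coarse bimodule) are essential. Second, one must make the additivity of the parameter rigorous for a possibly infinite family and for non-integer $D$: I would do this by truncating to finitely many variables and to an increasing finite-dimensional filtration of the hyperfinite $R$, applying the matrix-amalgamated free-product computations of \cite{dykema:interpolateddef,radulescu:inv,shlyakht:amalg,shlyakht:Avalued} at each finite stage to obtain $L(\mathbb F_{t_n})$ with $t_n\uparrow t$, and then using that a compatible increasing union of interpolated free group factors $L(\mathbb F_{t_n})$ is $L(\mathbb F_{\lim_n t_n})$.
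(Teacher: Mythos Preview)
Your proposal is correct and follows essentially the same route as the paper: invoke \S\ref{sect:OA-valued}(f), identify each $K_{ij}$ with the self-adjoint part of $L^2(R)\otimes_{D_i}L^2(R)$, compute the total $R\bar\otimes R$-dimension as $\sum_n k_n b_0(D_n)$, use that over the factor $R\bar\otimes R$ the Jordan subbimodule is determined by this dimension, and then read off the interpolated free group factor from the standard coarse model.

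The one place where the paper is more economical is your ``second obstacle.'' Rather than truncate and take an increasing union of $L(\mathbb{F}_{t_n})$, the paper writes down the standard form of the Jordan bimodule in one stroke:
\[
\bigoplus_n (K_n)^{\oplus k_n}\ \cong\ (L^2(R)\otimes L^2(R))^{\oplus N}\ \oplus\ L^2(R)p\otimes pL^2(R),
\]
with $N\in\mathbb{N}\cup\{0,\infty\}$ and $p\in R$ a projection satisfying $N+\tau(p)^2=\sum_n k_n b_0(D_n)$. This corresponds, on the operator side, to the concrete model $W^*(R,\,pY_0p,\,Y_1,\dots,Y_N)$ where $Y_0,\dots,Y_N$ is a free semicircular family free from $R$; the identification with $L(\mathbb{F}_{1+N+\tau(p)^2})$ is then a direct citation of \cite{dykema:interpolated,dykema:interpolateddef,radulescu:inv} (or Proposition~5.4 of \cite{shlyakht:amalg}). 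So no limiting argument is needed: the possibly non-integer dimension is absorbed entirely by the single compressed generator $pY_0p$.
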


\begin{proof}
For a subalgebra $D\subset R$, let $H$ and $K$ be as before: $H=L^2(R)\otimes_D L^2(R)\cong L^2(R_1)$ and $K=\{x\in L^2(R_1):x=x^*\}$.  
It is a well-known folklore result that the inclusion $H\subset L^2(R)\otimes L^2(R)$ 
 can be chosen in such a way that $K$ is isomorphic to $L^2(R)p \otimes p L^2(R)$ for a projection $p\in R$ with trace determined by  
$$\tau(p)^2 = \dim_{L(R)\bar\otimes L(R)} H =  b_0(D).$$   

Furthermore, if $D_i \subset R$ are subalgebras, $E_i$ is the conditional expectation on $D_i$ and $H_i$, $K_i$ the associated bimodule and Jordan bimodule, and if $k_n$ are given integers, then 
\begin{equation}\label{eqn:JordanIsom}
\bigoplus_n (K_n)^{\oplus k_n} \cong (L^2(R)\otimes L^2(R))^{\oplus N} \oplus L^2(R)p \otimes p L^2(R)
\end{equation} for some integer $N\in \mathbb{N}\cup \{0,+\infty\}$ and projection $p\in R$ satisfying $\tau(p)^2 + N = \sum_n k_n b_0(D_n)$ (let us agree to choose $p=0$ if $N=\infty$).

By \S\ref{sect:OA-valued}f if $X_{ij}$ are $R$-valued semicircular operators of covariance $\eta_{ij}=E_i$, then $W^*(R,X_{ij}: i\in \mathbb{N}\cup\{0\},j\in \{1,\dots,k_n\})$ depends only on the Jordan bimodule $\bigoplus_{nj} (K_{n})^{\bigoplus k_n}$, and thus only on the number $\sum_n k_n b_0(D_n)$.  

Moreover, choosing the isomorphism class of $\bigoplus_i K_i$ as in \eqref{eqn:JordanIsom} we see that 
$$W^*(R,X_1,X_2,\dots)\cong W^*(R,pY_0p,Y_1,\dots,Y_N)$$ where $Y_0,Y_1,\dots,Y_N$ are $R$-valued semicircular variables so that the variance of $Y_j$, $j=0,\dots,N$ is $x\mapsto \tau(x)$ (which implies that the variance of $pY_0p$, which is also $R$-semircircular, is $x\mapsto p\tau(p xp )$).  But this means that $Y_0, Y_1,\dots$ are a free semicircular family, free from $R$.  Thus by  the results in \cite{dykema:interpolated,dykema:interpolateddef,radulescu:inv}, see also Proposition~5.4 in \cite{shlyakht:amalg}, $W^*(R,X_1,X_2,\dots)$  is isomorphic to the Dykema-R\u{a}dulescu interpolated free group factor $L(\mathbb{F}_{t})$ where   $t = 1+N+\tau(p)^2 = 1 + \sum_n k_n b_0(D_n)$.  
\end{proof}

\begin{corollary}\label{cor:groupCR} Assume that the group $Q$ is ICC. Then $W^*(G)\cong L(\mathbb{F}_t)$ where $t = 1 + \sum_n k_n {|Q_n|^{-1}}$.
\end{corollary}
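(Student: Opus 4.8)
The plan is to reduce the statement directly to Proposition~\ref{prop:isomClassR}, the only genuine content being the evaluation of the zeroth $L^2$-Betti numbers $b_0(A_n)$. Write $Q=\bigcup_n Q_n$, so that $A=L(Q)$ is the weak closure of the increasing union $\bigcup_n A_n$, $A_n=L(Q_n)$. First I would observe that the hypothesis that $Q$ is ICC guarantees that $A=L(Q)$ is a II$_1$ factor; being an increasing union of the finite-dimensional algebras $A_n$, it is moreover hyperfinite, so that $A$ is precisely the algebra $R$ to which Proposition~\ref{prop:isomClassR} applies. By the Corollary following the Lemma we have $W^*(G)=C\cong W^*(A,X_{ij})$, where the $X_{ij}$ form the $A$-valued semicircular system with covariances $\eta_{ij}=E_i$ and each $D_i:=A_i=L(Q_i)$ is a finite-dimensional subalgebra of $A$.

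Applying Proposition~\ref{prop:isomClassR} with $R=A$ and $D_n=A_n$, I immediately obtain $W^*(G)\cong L(\mathbb{F}_t)$ with $t=1+\sum_n k_n\,b_0(A_n)$. It therefore remains only to show $b_0(A_n)=b_0(L(Q_n))=|Q_n|^{-1}$.

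For this I would use the two structural facts recalled before the Proposition: $b_0(M_m(\mathbb{C}))=m^{-2}$, and the additivity $b_0(\bigoplus_i D_i)=\sum_i \tau(p_i)^2 b_0(D_i)$. Decomposing the finite-dimensional algebra $L(Q_n)\cong\bigoplus_\pi M_{d_\pi}(\mathbb{C})$ over the irreducible representations $\pi$ of $Q_n$, the central projection $p_\pi$ onto the $\pi$-block has group trace $\tau(p_\pi)=d_\pi^2/|Q_n|$, while $b_0(M_{d_\pi}(\mathbb{C}))=d_\pi^{-2}$. Summing, $b_0(L(Q_n))=\sum_\pi (d_\pi^2/|Q_n|)^2 d_\pi^{-2}=|Q_n|^{-2}\sum_\pi d_\pi^2=|Q_n|^{-1}$, using $\sum_\pi d_\pi^2=|Q_n|$. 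Equivalently, this is the standard fact that the zeroth $L^2$-Betti number of a finite group is the reciprocal of its order. Substituting gives $t=1+\sum_n k_n |Q_n|^{-1}$, as claimed.

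The argument is essentially a bookkeeping exercise once Proposition~\ref{prop:isomClassR} is in hand; the only place where care is genuinely needed — and where the ICC hypothesis is actually used — is in checking that $A=L(Q)$ is a II$_1$ factor, so that the Proposition applies, and in keeping the normalizations of the traces on the blocks $M_{d_\pi}(\mathbb{C})$ consistent when invoking the additivity formula for $b_0$. I expect the Betti-number computation to be the main (if modest) obstacle, since it is the step that produces the numerical value $|Q_n|^{-1}$ and hence the exact exponent in the final formula.
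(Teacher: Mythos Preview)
Your proposal is correct and follows essentially the same approach as the paper: reduce to Proposition~\ref{prop:isomClassR} and compute $b_0(L(Q_n))=|Q_n|^{-1}$ via the decomposition of $L(Q_n)$ into matrix blocks indexed by irreducible representations, using $\tau(p_\pi)=d_\pi^2/|Q_n|$ and $\sum_\pi d_\pi^2=|Q_n|$. The only addition you make over the paper's proof is the explicit remark that the ICC hypothesis on $Q$ is what guarantees $A=L(Q)$ is a (hyperfinite) II$_1$ factor, which the paper leaves implicit.
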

\begin{proof}
If $D=L(T)$ for some finite group $T$, then $D=\bigoplus_\pi D_\pi$ where the direct sum is over all irreducible representations $\pi$ and $D_\pi$ is a matrix algebra of size $\dim\pi \times \dim\pi$.  Denote by $p_\pi$ the minimal central projection of $D$ corresponding to the summand $D_\pi$ (i.e., the unit of $D_\pi$).  Then the trace $\tau$ on $D$ is determined by $$\tau(p_\pi) = \frac{(\dim \pi)^2}{|T|}.$$ 
Thus $$b_0(D) = \sum_\pi \frac{(\dim \pi)^4}{|T|^2 (\dim{\pi})^2} = \sum_\pi \frac{(\dim\pi)^2}{|T|^2 }
 = \frac{1}{|T|^2}\sum_\pi (\dim \pi)^2 = \frac{|T|}{|T|^2} = \frac{1}{|T|}.$$  (Rather than this computation, we could have just used \cite{cs:l2betti}  to conclude that $b_0(D)=b_0(L(T))=b_0(T)$, the zeroth $L^2$-Betti number of $T$).  Applying this fact in the context of Proposition~\ref{prop:isomClassR} gives the statement of the corollary.
\end{proof}

\begin{theorem}  \label{thrm:groupAlg} Let $t\in (1,+\infty]$ be arbitrary. Then there exists a countable discrete group $G_t$ so that $L(G_t)$ is isomorphic to the Dykema-Radulescu interpolated free group factor $L(\mathbb{F}_t)$.  Thus, the group factors $L(G_t)$ satisfy the amplification formula $L(G_t)^s\simeq L(G_{1+(t-1)/s^2})$. 
\end{theorem}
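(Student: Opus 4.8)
The plan is to realize the target value $t$ by choosing the finite groups $Q_n$ and the integers $k_n$ in Definition~\ref{sect:construct} so that the formula from Corollary~\ref{cor:groupCR} produces exactly $t$. The key identity is $t = 1 + \sum_n k_n |Q_n|^{-1}$, valid whenever $Q = \bigcup_n Q_n$ is ICC, so the whole problem reduces to a purely arithmetic one: given $t \in (1, +\infty]$, construct an increasing sequence of finite groups $\{e\} = Q_0 \subset Q_1 \subset Q_2 \subset \cdots$ with $\bigcup_n Q_n$ ICC, together with non-negative integers $k_n$, such that $\sum_n k_n |Q_n|^{-1} = t - 1$.

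First I would fix the groups $Q_n$ once and for all in a way that guarantees $\bigcup_n Q_n$ is ICC, independently of $t$. A convenient choice is an increasing sequence of finite groups whose union is ICC and whose orders grow at least geometrically, for instance $Q_n = (\mathbb{Z}/2\mathbb{Z})^n$ as in Hjorth's construction (so $|Q_n| = 2^n$ and $\bigcup_n Q_n$ is the locally finite ICC group $(\mathbb{Z}/2\mathbb{Z})^{(\infty)}$), or a suitable sequence of symmetric groups. With the $Q_n$ fixed, I then need to solve $\sum_n k_n |Q_n|^{-1} = t - 1$ over the non-negative integers $k_n$. This is a greedy-type expansion problem: since $|Q_n|^{-1} \to 0$, one can write $t - 1$ in the numeration system with digits $k_n$ and place values $|Q_n|^{-1}$, greedily choosing each $k_n$ to be the largest integer with $\sum_{m \le n} k_m |Q_m|^{-1} \le t - 1$. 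Because consecutive place values satisfy $|Q_{n+1}|^{-1} < |Q_n|^{-1}$ and the tail $\sum_{m > n} |Q_m|^{-1}$ can be made to dominate any single remaining place value by allowing $k_m$ to be arbitrarily large, every real number in $(0, +\infty]$ (including $+\infty$, obtained by letting the partial sums diverge) is attainable. The case $t = +\infty$ is handled by choosing $k_n$ so that $\sum_n k_n |Q_n|^{-1} = +\infty$, which by Corollary~\ref{cor:groupCR} gives $L(G) \cong L(\mathbb{F}_\infty)$.

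Having produced such $\{Q_n\}, \{k_n\}$, I set $G_t = G(\{Q_i\},\{k_i\})$. By Corollary~\ref{cor:groupCR} (whose ICC hypothesis on $Q = \bigcup_n Q_n$ holds by our choice of the $Q_n$), we get $L(G_t) \cong L(\mathbb{F}_t)$ immediately. The group $G_t$ is countable and discrete because it is an increasing union of the finitely generated groups $G_n$. The amplification formula is then a formal consequence: combining $L(G_t) \cong L(\mathbb{F}_t) = L(\mathbb{F}_2)^{1/\sqrt{t-1}}$ with the amplification rule $(M^r)^s = M^{rs}$ gives
\[
L(G_t)^s \cong \bigl(L(\mathbb{F}_2)^{1/\sqrt{t-1}}\bigr)^s = L(\mathbb{F}_2)^{s/\sqrt{t-1}} = L(\mathbb{F}_2)^{1/\sqrt{(t-1)/s^2}} = L(\mathbb{F}_{1+(t-1)/s^2}) \cong L(G_{1+(t-1)/s^2}),
\]
using that $1/\sqrt{(t-1)/s^2} = s/\sqrt{t-1}$ and that $1 + (t-1)/s^2$ again lies in $(1, +\infty]$, so that $G_{1+(t-1)/s^2}$ is defined by the same construction.

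I expect the only real obstacle to be the arithmetic realizability of the target value, i.e.\ verifying that every $t \in (1, +\infty]$ genuinely arises as $1 + \sum_n k_n |Q_n|^{-1}$ for the fixed sequence $\{Q_n\}$; the von Neumann algebraic content is already fully packaged in Corollary~\ref{cor:groupCR}. This obstacle is mild: it is the observation that a series with strictly decreasing, summable-if-bounded place values $|Q_n|^{-1}$ and unbounded integer coefficients $k_n$ has partial sums that are dense and surjective onto $(0, +\infty]$. Care is needed only in (i) confirming that the chosen $Q_n$ really do yield an ICC union, and (ii) handling the endpoint $t = +\infty$ separately, which is why the statement allows $t \in (1, +\infty]$ rather than $t \in (1, +\infty)$.
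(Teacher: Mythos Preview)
Your overall strategy is exactly the paper's: invoke Corollary~\ref{cor:groupCR} and reduce everything to the arithmetic problem of writing $t-1=\sum_n k_n|Q_n|^{-1}$. The amplification formula paragraph is fine.

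There is, however, a concrete error in your choice of $\{Q_n\}$. You propose $Q_n=(\mathbb{Z}/2\mathbb{Z})^n$ and assert that the union is ``the locally finite ICC group $(\mathbb{Z}/2\mathbb{Z})^{(\infty)}$''. But $(\mathbb{Z}/2\mathbb{Z})^{(\infty)}$ is abelian, so every conjugacy class is a singleton and the group is as far from ICC as possible. Corollary~\ref{cor:groupCR} therefore does not apply, and indeed the paper notes in Remark~\ref{remark:Gt}.2$^\circ$ that with this choice (and $k_0=0$) the algebra $L(G)$ is not even a factor. So your first suggested sequence fails the very hypothesis you flag in item (i) as needing care.

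Your parenthetical alternative, ``a suitable sequence of symmetric groups'', is the correct fix and is precisely what the paper does: take $Q_n=S_n$, so that $Q=\bigcup_n Q_n=S_\infty$ is ICC and $|Q_n|=n!$. The greedy expansion argument you sketch (choose $k_n$ maximal with $\sum_{m\le n}k_m|Q_m|^{-1}\le t-1$; since $|Q_n|^{-1}\to 0$ the remainders tend to zero) then goes through verbatim and yields the desired representation $t-1=\sum_n k_n/n!$ for every $t\in(1,+\infty]$. With that single correction your proof coincides with the paper's.
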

\begin{proof}
Let $Q_n = S_n$, the symmetric group on $n$ letters.  Then $Q=S_\infty$ is ICC. Moreover, $b_0 (A_n) = 1/|Q_n|=1/n!$.  Since $\sum_n \frac{1}{n!} <\infty$, for any $t\in (1,+\infty]$ we can choose  
integers $k_n\in \mathbb{N}\cup\{0\}$ in such a way that $$t = 1+\sum_n \frac{k_n}{n!}.$$  Let $G$ be the group constructed in \S\ref{sect:construct}.  Then by Corollary~\ref{cor:groupCR}, $$L(G)\cong C\cong L(\mathbb{F}_t).$$  This completes the proof.
\end{proof}

The following corollary follows from the fact that interpolated free groups factors $L(\mathbb{F}_t)$ for all $t\in (1,+\infty)$ are stably isomorphic: 

\begin{corollary}  \label{cor:WE} Let $Q_0 \subset Q_1\subset\cdots$, $Q_0'\subset Q_1'\subset\cdots$ be  sequences of groups and $k_0, k_1,\dots$, $k_0',k_1',\dots$ be  sequences of integers, and let $G=G(\{Q_i\},\{k_i\})$, $G'=G(\{Q'_i,k'_i\})$.  Assume that $Q=\bigcup_n Q_n$ and $Q'=\bigcup_n Q_n'$ are both ICC.  Let $d = 1+\sum k_n |Q_n|^{-1}$, $d' = \sum_n k_n' |Q'_n|^{-1}$.  If $d,d'$ are both simultaneously finite or simultaneously infinite, then $L(G)$ and $L(G')$ are stably isomorphic.  If $d=d'$, then $L(G)\cong L(G')$.  
\end{corollary}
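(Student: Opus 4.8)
The plan is to deduce Corollary~\ref{cor:WE} directly from Corollary~\ref{cor:groupCR} together with the stable isomorphism of interpolated free group factors. First I would apply Corollary~\ref{cor:groupCR} to each of the two groups: since both $Q=\bigcup_n Q_n$ and $Q'=\bigcup_n Q_n'$ are ICC, we obtain $L(G)\cong L(\mathbb F_d)$ and $L(G')\cong L(\mathbb F_{d'})$ where $d=1+\sum_n k_n|Q_n|^{-1}$ and (reading the parameter correctly) the relevant amplification parameter for $G'$ is $1+\sum_n k_n'|Q_n'|^{-1}=1+d'$. So the entire problem reduces to recalling exactly which interpolated free group factors are isomorphic, and which are merely stably isomorphic.

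The key input, which I would cite from \cite{dykema:interpolated,dykema:interpolateddef,radulescu:inv}, is the amplification formula $L(\mathbb F_r)^s\cong L(\mathbb F_{1+(r-1)/s^2})$, valid for all $r\in(1,\infty]$ and $s>0$. Two consequences are immediate. First, for any finite $r,r'>1$ one can solve $1+(r-1)/s^2=r'$ for a positive $s$, so $L(\mathbb F_r)$ and $L(\mathbb F_{r'})$ are stably isomorphic; this handles the ``both finite'' case. Second, $L(\mathbb F_\infty)^s\cong L(\mathbb F_\infty)$ for every $s>0$, so all the infinite-parameter factors are stably isomorphic among themselves as well; this handles the ``both infinite'' case. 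Since the amplification of $L(\mathbb F_r)$ never produces $L(\mathbb F_\infty)$ when $r$ is finite (the parameter $1+(r-1)/s^2$ stays finite for finite $s>0$), the finite and infinite regimes do not mix, which is exactly why the hypothesis requires $d$ and $d'$ to be simultaneously finite or simultaneously infinite. Finally, when $d=d'$ the two factors $L(\mathbb F_d)$ and $L(\mathbb F_{d'})$ are literally the same factor, giving the honest (non-stable) isomorphism $L(G)\cong L(G')$.

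I would write the argument in two short steps: identify each group factor as an interpolated free group factor via Corollary~\ref{cor:groupCR}, then invoke the (stable) isomorphism classification recalled above. There is no substantive obstacle here; the only thing demanding care is bookkeeping of the parameter conventions, in particular reconciling the definition $d'=\sum_n k_n'|Q_n'|^{-1}$ in the statement (which omits the additive $1$) with the ``$1+\sum$'' normalization forced by Corollary~\ref{cor:groupCR}. I expect this notational reconciliation to be the one genuinely error-prone point, so I would state the parameters explicitly as $L(G)\cong L(\mathbb F_{1+\sum_n k_n|Q_n|^{-1}})$ and $L(G')\cong L(\mathbb F_{1+\sum_n k_n'|Q_n'|^{-1}})$ before comparing them, so that the finite/infinite dichotomy and the equality case read off cleanly from the values of these parameters.
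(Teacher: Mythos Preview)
Your proposal is correct and matches the paper's own argument, which simply notes that the corollary follows from Corollary~\ref{cor:groupCR} together with the stable isomorphism of interpolated free group factors $L(\mathbb{F}_t)$ for $t\in(1,+\infty)$. Your explicit handling of the amplification formula and the finite/infinite dichotomy, as well as your observation about the missing ``$1+$'' in the definition of $d'$, are accurate and more detailed than the paper's one-line justification.
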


\subsection{Remarks} \label{remark:Gt} $1^\circ$ Note that in Theorem~\ref{thrm:groupAlg} we could have set $G_t$ to be any group $G(\{Q_i\},\{k_i\})$ as long as $Q=\bigcup Q_i$ is ICC and $t=1 + \sum k_n |Q_n|^{-1}$.  We will use the notation $G_t$ somewhat ambiguously for any such group.

$2^\circ$ If one drops the ICC condition on the group $Q$, the von Neumann algebra $L(G(\{Q_i\},\{k_i\}))$ may fail to be a factor. Indeed, suppose that $Q_n = (\mathbb{Z}/2\mathbb{Z})^n$, as in Hjorth's original example, and $k_0=0$.  Then $Q_1$ clearly belongs to the center of $G$.   

$3^\circ$ On the other hand, suppose that $Q_n$ are arbitrary (e.g., $Q_n = (\mathbb{Z}/2\mathbb{Z})^n$) and $k_0 \neq 0$.  Then we can repeat our previous argument, replacing the algebra $R$ in the previous sections by the factor $W^*(G_0, Q)\cong L(G_0)* L(Q)\cong L(\mathbb{F}_{k_0+1})$, to deduce that $L(G(\{Q_i\},\{k_i\}))\cong L(\mathbb{F}_t)$ with $t=1 + \sum_n k_n {|Q_n|^{-1}}$.  This gives another family of groups generating interpolated free group factors $L(\mathbb{F}_t)$, but for $t\geq 2$.

$4^\circ$ We expect that $L(G(\{Q_i\},\{k_i\}))$ must be an interpolated free group factor whenever it is a factor, but we were not able to prove this.

\section{Measure Equivalence theory of the groups $G(\{Q_i\},\{k_i\})$.}

Let us first recall a few notions about treeability (see \cite{gaboriau:cost,kechris-miller:OE}).  An equivalence relation $R$ is {\em treeable} iff it has a graphing $\Gamma$ which is a treeing.  Equivalently, $R$ is the free product of a family of finite and hyperfinite equivalence relations.  

A group $G$ is said to be {\em treeable} if there exist a free probability measure-preserving (free pmp) action of $G$ so that the associated orbit equivalence relation $R_G$ is treeable.  A group is {\em strongly treeable} if {\em every} free pmp action of $G$ is treeable.  

We will say that a group $G$ is {\em OE-free} if  for some number $t\in [1,+\infty]$, any ergodic treeable equivalence relation of cost $t$ arises from a free pmp action of $G$ (in which case it necessarily holds that the first $L^2$ Betti number of $G$ is $\beta_1^{(2)}(G)=t+1$).  Clearly, OE-free $\implies$ treeable  and strogly treeable $\implies$ treeable; it's not known if any of these implications can be reversed when $t\neq 1$.  

The first examples of OE-free groups were found by Hjorth in \cite{hjorth:costAttained}.  He showed that this property holds for free groups $\mathbb{F}_n$ as well as (in our notation) the groups $G(\{\mathbb{Z}/2\mathbb{Z})^n\},\{k_n\})$.   The main result of this section is that  groups $G(\{Q_i\},\{k_i\})$ are OE-free and strongly treeable.  Strong treeability (as well as the formula for the cost) follows from Gaboriau's results \cite{gaboriau:cost}; however, we give the full argument, since we anyways need much of the set-up to give a proof of OE-freeness.  The proof of the latter property is a slight generalization of Hjorth's argument \cite{hjorth:costAttained}.  Together with the results of Section 2, the results of this section complete the proof of Theorem~\ref{mainthrm}. 

\subsection{Equivalence relations $R_Q$, $R_G$ and $R_n$}
Assume that $\alpha$ is a free measure preserving action of $G$ on a probability space $(X,\mu)$.  Let us denote by $R_G$ the equivalence relation of being in the same orbit of this action.  Denote by $R_Q$ the equivalence relation of being in the same orbit under the restriction of $\alpha$ to $Q$.  Then $R_Q$ is a sub-equivalence relation of $R_G$.  Finally, let $R_n$ be the equivalence relation of being in the same orbit of the restriction of $\alpha$ to $Q_n$. Then $R_n\subset R_{n+1}$ for all $n$, and $R_Q = \bigcup_n R_n$.  Clearly, each $R_n$ is an equivalence relation all of whose orbits are finite, consisting of $|Q_n|$ elements.  

\begin{lemma} \label{lem:partitions}
There exist partitions 
 $P_n = (F(j_1,\dots,j_n): j_k \in \{1,\dots,q_{k}\})$ of the set $X$ so that $P_{n+1}$ is a refinement of $P_n$ and $P_n$ is a fundamental domain for $R_n$: every orbit of $R_n$ meets each element of the partition $P$ exactly once.  In other words, for any $x\neq y$, $x\sim_{R_n} y$ implies that $x$ and $y$ are points in different elements of the partition $P_n$, and for each $A,B\in P_n$, $A\neq B$, and any $x\in A$, there exists a unique $y\in B$ so that $x\sim_{R_n} y$.  
\end{lemma}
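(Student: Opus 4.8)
The plan is to build the partitions $P_n$ by induction on $n$, obtaining $P_{n+1}$ from $P_n$ by a measurable refinement of each block; throughout, ``partition'' means a Borel partition, since $(X,\mu)$ is a standard probability space. Set $q_k = [Q_k : Q_{k-1}] = |Q_k|/|Q_{k-1}|$, so that the intended number of blocks $q_1 q_2 \cdots q_n$ equals $[Q_n : Q_0] = |Q_n|$. Because $\alpha$ is free, every $R_n$-orbit has exactly $|Q_n|$ points, and this count is exactly what the fundamental-domain condition forces: if each block meets each orbit exactly once, then the number of blocks must equal the orbit size. For the base case $n=0$ we have $Q_0=\{e\}$, so $R_0$ is the diagonal, its orbits are singletons, and $P_0=\{X\}$ is the unique fundamental domain, indexed by the empty tuple.

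For the inductive step, assume $P_n=(F(j_1,\dots,j_n))$ has been constructed with each block a fundamental domain for $R_n$, and fix one block $F=F(j_1,\dots,j_n)$. Since $R_n\subset R_{n+1}$, each $R_{n+1}$-orbit decomposes into the $R_n$-orbits it contains, and these are indexed by the cosets of $Q_n$ in $Q_{n+1}$ (using freeness: $\alpha(g)x \sim_{R_n} \alpha(g')x$ iff $g(g')^{-1}\in Q_n$); hence there are exactly $q_{n+1}$ of them. As $F$ meets each $R_n$-orbit exactly once, it therefore meets each $R_{n+1}$-orbit in exactly $q_{n+1}$ points. I will split $F$ into $q_{n+1}$ blocks $F(j_1,\dots,j_n,1),\dots,F(j_1,\dots,j_n,q_{n+1})$, each meeting every $R_{n+1}$-orbit exactly once. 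Carrying this out in every block of $P_n$ yields $P_{n+1}$: by construction it refines $P_n$, it has $q_1\cdots q_{n+1}=|Q_{n+1}|$ blocks, and each block $F(j_1,\dots,j_n,i)$ is a fundamental domain for $R_{n+1}$ because an arbitrary $R_{n+1}$-orbit meets its parent block $F$ in exactly $q_{n+1}$ points, which the splitting distributes one to each sub-block.

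To split $F$ measurably I would use a ``rank within the orbit'' coloring. Identify $(X,\mu)$ with $[0,1]$ via a Borel isomorphism and let $\le$ be the induced Borel linear order. On $F$ the restricted relation $S = R_{n+1}\cap (F\times F)$ is a Borel equivalence relation all of whose classes have exactly $q_{n+1}$ points. Define $c\colon F\to\{1,\dots,q_{n+1}\}$ by $c(y)=\#\{z\in F : z\mathrel{S}y,\ z\le y\}$ and put $F(j_1,\dots,j_n,i)=c^{-1}(i)$. Within each $S$-class the ranks $1,\dots,q_{n+1}$ are each attained exactly once, so each $F(j_1,\dots,j_n,i)$ meets every $R_{n+1}$-orbit in exactly one point, as required.

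The main obstacle is precisely the measurability of this coloring, i.e.\ that $c$ is Borel. This follows from the Lusin--Novikov uniformization theorem: the countable Borel equivalence relation $S$ admits a decomposition $S=\bigcup_i \mathrm{graph}(\psi_i)$ into countably many Borel partial injections with pairwise disjoint graphs, so that $\{z : z\mathrel{S}y,\ z\le y\}=\{\psi_i(y) : \psi_i(y)\le y\}$ and the counting is a Borel operation; hence each level set $c^{-1}(i)$ is Borel. Once the single invariant ``every block of $P_n$ meets every $R_n$-orbit exactly once'' is maintained by the induction, the two explicit assertions in the statement are immediate: a block meets each orbit at most once by definition, and given distinct blocks $A,B$ and $x\in A$, the $R_n$-orbit of $x$ meets $B$ in a unique point $y\neq x$ with $x\sim_{R_n}y$.
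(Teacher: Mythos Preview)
Your argument is correct. The inductive invariant you maintain (each block of $P_n$ meets each $R_n$-orbit exactly once), the counting via cosets of $Q_n$ in $Q_{n+1}$, and the rank-coloring construction are all sound; invoking Lusin--Novikov to certify that the rank function is Borel is the right level of care.

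The paper takes a somewhat different route. Rather than splitting blocks by a linear-order rank, it passes to the tower of quotient spaces $X=X_0\to X_1\to X_2\to\cdots$ with $X_n=X/R_n$, and at each level chooses a full system of measurable cross-sections $\sigma_{1,n+1},\dots,\sigma_{q_{n+1},n+1}:X_{n+1}\to X_n$ whose images partition $X_n$. The atom $F(j_1,\dots,j_n)$ is then the image of the composite $\sigma_{j_1,1}\circ\cdots\circ\sigma_{j_n,n}:X_n\to X$. The refinement property and the fundamental-domain property are read off from the cross-section axioms. In effect, the paper encodes the entire nested sequence of partitions at once by a single choice at each level of the quotient tower, whereas you work block-by-block inside $X$. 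Your approach is more explicit about the descriptive-set-theoretic content (the paper simply asserts that measurable cross-sections exist), while the paper's makes the compatibility of the $P_n$'s for varying $n$ automatic from the outset rather than something assembled inductively. Both are standard and of comparable length.
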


\begin{proof}
Denote by $X_n$ the quotient $X_n/R_n$ (in particular, $X_0 = X$).  Let $\pi_n : X\to X_n$ be the quotient map, and let $\mu_n$ be the push-forward of the measure $\mu$ by $\pi_n$.  Since $R_n\subset R_{n+1}$, the map assigning to each $R_n$-orbit the unique $R_{n+1}$ orbit containing it gives a map $\rho_n : X_n\to X_{n+1}$; by construction, $\rho_n\circ \pi_n = \pi_{n+1}$.  

Let $q_{n+1} = |Q_{n+1} / Q_n|$.  For each $n$, we can choose measurable maps $\sigma_{1,n+1},\dots,\sigma_{q_{n+1},n+1} : X_{n+1}\to X_n$ in such a way that (i) each $\sigma_{j,n+1}$ is a cross-section to $\rho_n$, meaning that $\rho_n\circ \sigma_{j,n+1}(x) = x$ for a.e. $x\in X_{n+1}$ and (ii) the sets $\sigma_{j,n+1} (X_{n+1})$, $j=1,\dots,q_{n+1}$, are disjoint (modulo null sets) and cover $X_{n}$ (modulo null sets), i.e., form a partition of $X_n$.  

For each $n$ and each $j_1,\dots,j_n$ so that $j_k\in \{1,\dots,q_k\}$, define subsets
$$F(j_1,\dots,j_n) = \sigma_{j_1,1} \circ \sigma_{j_2,2} \circ \cdots \circ \sigma_{j_n, n} (X_n) \subset X.$$  
By construction, $$F(j_1,\dots,j_{n+1})\subset F(j_1,\dots,j_n)$$ since $ \sigma_{j_1,1} \circ \sigma_{j_2,2} \circ \cdots \circ \sigma_{j_n, n} \circ \sigma_{j_{n+1},n+1} (X_{n+1})\subset  \sigma_{j_1,1} \circ \sigma_{j_2,2} \circ \cdots \circ \sigma_{j_n, n} (X_n)$.  

We claim that  $(F(j_1,\dots,j_n): j_k \in \{1 ,\dots,q_k\})$ form a partition of $X$.  We check this by induction.  If $n=1$, then $F(j_k) = \sigma_{j,1}(X_1)$ and these form a partition of $X_0$.  Having checked the inductive step, we note that $(\sigma_{k,n+1}(X_{n+1}):k=1,\dots,q_{n+1})$ form a partition of $X_{n}$, and so  $\sigma_{j_1,1} \circ \sigma_{j_2,2} \circ \cdots \circ \sigma_{j_n, n} (X_n)$ is the disjoint union of $ 
  \sigma_{j_1,1} \circ \sigma_{j_2,2} \circ \cdots \circ \sigma_{j_n, n} \circ \sigma_{k,n+1}(X_{n+1})$ as $k$ ranges from $1$ to $q_{n+1}$. This completes the inductive step.
\end{proof}  

\subsection{A graphing for $R_G$}
Define now $\theta: X\to X$ to be the odometer built on the partitions $P_n$. More precisely, note that for almost all $x\in X$, we can choose the smallest $n$ so that there exists an $r\leq n$ and $j_1,\dots,j_n$ with $j_r<q_r$ and $x\in F(j_1,\dots,j_n)$.   Choose $r$ to be the smallest possible integer with this property. We define $\theta(x)$ to be the unique $y\in F(j_1,\dots,j_r+1,\dots,j_n)$ so that $x\sim_{R_n} y$.   It is clear from the construction that the equivalence relation generated by $\theta$ is exactly $R_Q = \bigcup R_n$.

Recall that $G = \bigcup_n G_n$ with $G_{n+1} = G_{n} *_{Q_n} (\mathbb{F}_{k_{n+1}} \otimes  Q_{n+1})$.  Let us denote by $g_{n+1,j}$, $j=1,\dots,k_{n+1}$ the generators of $\mathbb{F}_{k_{n+1}}$.  Since $\theta$ generates $R_Q$, it follows that the set
$$ \Gamma = \{ \theta, \alpha(g_{n,j}) : n=0,1,\dots, 1\leq j\leq k_n\}$$
is a graphing for $R_G$.  

Let $\phi_{n,j}$ denote the restriction of $\alpha(g_{n,j})$ to the set $F({\bbone}_n)$, where we write ${\bbone}_n$ for the $n$-tuple consisting of all $1$'s.  

\begin{lemma} \label{lem:graphing} The set
$$\Gamma '=  \{ \theta, \phi_{n,j}  : n=0,1,\dots, 1\leq j\leq k_n\}$$ is a graphing for $R_G$.  
\end{lemma}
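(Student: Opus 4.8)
The plan is to establish both inclusions between $R_G$ and the equivalence relation $R_{\Gamma'}$ generated by $\Gamma'$; note first that $\Gamma'$ is a legitimate family of partial isomorphisms, since $\theta$ is a measure-preserving bijection and each $\phi_{n,j}$ is the restriction of the measure-preserving bijection $\alpha(g_{n,j})$ to the measurable set $F(\bbone_n)$. One inclusion is immediate: each $\phi_{n,j}$ is by construction a restriction of $\alpha(g_{n,j})$, hence a partial isomorphism of $R_G$, and $\theta$ generates $R_Q\subseteq R_G$; therefore every element of $\Gamma'$ is a partial isomorphism of $R_G$, giving $R_{\Gamma'}\subseteq R_G$. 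The content of the lemma is the reverse inclusion $R_G\subseteq R_{\Gamma'}$.

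Since $\Gamma=\{\theta,\alpha(g_{n,j})\}$ is already known to be a graphing for $R_G$, it suffices to show that for each $n,j$ the \emph{full} transformation $\alpha(g_{n,j})$ lies in the groupoid generated by $\Gamma'$, i.e. that $(x,\alpha(g_{n,j})x)\in R_{\Gamma'}$ for a.e.\ $x$. The key structural input is that $H_n=Q_n\times\mathbb{F}_{k_n}$ is a \emph{direct} product, so $g_{n,j}$ commutes with every element of $Q_n$; the second ingredient is Lemma~\ref{lem:partitions}, which tells us that $F(\bbone_n)$ is a transversal for $R_n$, meeting each $R_n$-orbit exactly once. Given $x$, let $\bar x$ be the unique point of $F(\bbone_n)$ with $x\sim_{R_n}\bar x$, and write $x=\alpha(h)\bar x$ for the unique $h\in Q_n$ (uniqueness by freeness of $\alpha$). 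Using the commutation relation $g_{n,j}h=hg_{n,j}$ I then compute
\[
\alpha(g_{n,j})x=\alpha(g_{n,j})\alpha(h)\bar x=\alpha(h)\alpha(g_{n,j})\bar x=\alpha(h)\bigl(\phi_{n,j}(\bar x)\bigr).
\]
This exhibits $(x,\alpha(g_{n,j})x)$ as a composition of three moves inside $R_{\Gamma'}$: first pass from $x$ to $\bar x$ along $R_n$; then apply $\phi_{n,j}\in\Gamma'$; and finally pass from $\phi_{n,j}(\bar x)$ to $\alpha(h)(\phi_{n,j}(\bar x))=\alpha(g_{n,j})x$ along $R_n$ again, these two points lying in a common $Q_n$-orbit. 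Since $R_n\subseteq R_Q=R_\theta\subseteq R_{\Gamma'}$ and $\phi_{n,j}\in\Gamma'$, every move stays within $R_{\Gamma'}$; letting $n,j$ vary yields $R_G\subseteq R_{\Gamma'}$, hence equality.

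The routine points to check are measurability of the maps $x\mapsto\bar x$ and $x\mapsto h$, which follow from the measurable structure of the partitions $P_n$ together with freeness of $\alpha$, and the observation that $R_n\subseteq R_Q$ is generated by $\theta$, so that all moves along $R_n$ stay in $R_{\Gamma'}$. I expect the only genuinely delicate point to be the bookkeeping in the commutation step: one must track the direction of the relation ($x=\alpha(h)\bar x$ rather than $\bar x=\alpha(h)x$) and invoke freeness to pin down $h$, so that the terminal point $\alpha(h)(\phi_{n,j}(\bar x))$ is indeed $\alpha(g_{n,j})x$ and not merely some other representative of its $R_n$-class. The case $n=0$, where $Q_0=\{e\}$, $F(\bbone_0)=X$ and $\phi_{0,j}=\alpha(g_{0,j})$, is a degenerate instance of this computation and requires no separate argument.
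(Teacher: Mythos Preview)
Your proof is correct and follows essentially the same route as the paper: both arguments reduce to showing that each full transformation $\alpha(g_{n,j})$ lies in the equivalence relation generated by $\Gamma'$, and both do this by moving $x$ into $F(\bbone_n)$ via an element of $Q_n$, applying $\phi_{n,j}$ there, and moving back, exploiting the commutation of $g_{n,j}$ with $Q_n$. Your bookkeeping (writing $x=\alpha(h)\bar x$ with $\bar x\in F(\bbone_n)$) is the mirror image of the paper's choice $x'=\alpha(h)(x)\in F(\bbone_n)$, and your exposition is in fact somewhat cleaner.
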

\begin{proof}Indeed, denote by $R\subset R_G$ the equivalence relation generated by $\Gamma'$.  To see that $R = R_G$, suppose that $x,y\in X$ and for some $n$ and $j$, $y=\alpha(g_{n,j})(x)$.  Then almost surely, $x\in F(j_1,\dots,j_n)$ for some $j_1,\dots,j_n$, which means that for some $h\in Q_n$, $\alpha(h)(x)\in F(\mathbb{1}_n)$.  Since $Q_n$ commutes with $g_{n,j}$, it follows that 
\begin{equation*}
 y = \alpha(g_{n,j}) \alpha(h)^{-1} \alpha(h) (x) 
  = \alpha(h)^{-1} \alpha(g_{n,j}) \alpha(h) (x) 
  = \alpha(h)^{-1} \theta(g_{n,j}) \alpha(h) (x).
 \end{equation*}
 It follows that if we set $x'=\alpha(h)(x)$, $y'=\alpha(h)(y)$ , then $x\sim_{R_Q} x'$ and 
 $$
 y\sim_{R_Q} y' = \theta_{g_{n,j}} (x') \sim_{R} x' \sim_{R_Q} y.$$  Since $\theta$ generates $R_Q$, it follows that $R_Q\subset R$ and thus $y\sim_R x$.  But this means that $R=R_Q$.
 \end{proof}
 
 \begin{lemma} 
The graphing $\Gamma'$ is a treeing and $C(\Gamma') = 1+\sum_{r=0}^\infty  {k_r}{|Q_r|^{-1}}.$
\end{lemma}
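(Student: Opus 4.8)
The plan is to establish the two assertions separately: the cost formula is a direct measure computation, while the treeing property requires showing that $\Gamma'$ produces no nontrivial reduced cycle. For the cost, recall that $C(\Gamma') = \sum_{\gamma\in\Gamma'}\mu(\mathrm{dom}\,\gamma)$. The odometer $\theta$ is defined almost everywhere on $X$, so it contributes $\mu(X)=1$. Each $\phi_{n,j}$ has domain $F(\bbone_n)$, and I would first record that all blocks of $P_n$ have equal measure $|Q_n|^{-1}$: the partition $P_n = (F(j_1,\dots,j_n))$ has exactly $\prod_{k=1}^n q_k = |Q_n|$ blocks and is a fundamental domain for $R_n$, whose classes all consist of $|Q_n|$ points, so since $\alpha$ is measure preserving and each $R_n$-class meets every block of $P_n$ once, each block has measure $|Q_n|^{-1}$ (with $F(\bbone_0)=X$, $|Q_0|=1$). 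Summing over the $k_n$ generators at each level gives $C(\Gamma') = 1 + \sum_{r=0}^\infty k_r|Q_r|^{-1}$.

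For the treeing property I would show that $\Gamma'$ admits no nontrivial reduced cycle, and then invoke freeness. To a $\Gamma'$-path $x=x_0,x_1,\dots,x_m=y$ one associates the element $w\in G$ with $\alpha(w)x=y$, read off by recording $g_{n,j}^{\pm1}$ at each $\phi_{n,j}$-step and, after grouping maximal runs of $\theta^{\pm1}$ into blocks, the element $h\in Q$ (depending on the base point) realized by each such $\theta$-block; here one uses that $\theta$ generates $R_Q$ and is free, so a nonempty reduced $\theta$-block genuinely moves its base point and hence realizes some $h\neq e$. A reduced cycle ($y=x$, $m>0$) thus produces an alternating word $w=\cdots g^{\pm}h\,g^{\pm}h\cdots$ in the free generators and the amalgam. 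If this word can be shown to be in reduced normal form for the iterated amalgamated free product $G=\bigcup_n G_n$, $G_{n+1}=G_n*_{Q_n}H_{n+1}$, then $w\neq e$ in $G$, and freeness of $\alpha$ forces $\alpha(w)x\neq x$; hence no nontrivial cycle closes up and $\Gamma'$ is a treeing.

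The hard part will be exactly this verification that reduced $\Gamma'$-cycles give reduced words in $G$. The obstruction is that $g_{n,j}$ commutes with $Q_n\supset Q_{n-1}$, so a subword $g_{n,j}^{-1}h\,g_{n,j}$ with $h$ in the amalgam $Q_{n-1}$ collapses to $h$, destroying reducedness. This is precisely what the construction of $\Gamma'$ is designed to prevent: $\phi_{n,j}$ is restricted to $F(\bbone_n)$, and the odometer $\theta$ is built on the nested fundamental domains $P_n$ so that a $\theta$-block linking the (co)domain of one $\phi$-letter to that of the next realizes an element $h$ lying in a coset that keeps the alternating word reduced. Carrying out this coset bookkeeping level by level, controlling for each junction which $Q_n$-coset the intervening $\theta$-block lands in, is the technical heart of the argument, and it is here that we follow and slightly extend Hjorth's analysis in \cite{hjorth:costAttained}. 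Once reducedness is established, the cost computed above is attained by a treeing, which (together with Gaboriau's results \cite{gaboriau:cost}) will yield the cost of $R_G$ in the sequel.
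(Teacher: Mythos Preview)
Your cost computation is correct and essentially the same as the paper's. For the treeing assertion, however, your approach diverges significantly from the paper's, and the argument you outline is left incomplete at exactly the point where the work lies.

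You propose a direct combinatorial verification: translate a reduced $\Gamma'$-cycle into a word in $G$ and show that this word is in reduced normal form for the iterated amalgamated free product, so that freeness of $\alpha$ gives a contradiction. You correctly identify the obstruction---the commutation of $g_{n,j}$ with $Q_n$ can collapse subwords of the form $g_{n,j}^{-1} h\, g_{n,j}$ when $h\in Q_n$---and you correctly observe that the restriction of $\phi_{n,j}$ to $F(\bbone_n)$ is what should prevent this. But you then defer the actual verification (``the technical heart of the argument'') to Hjorth. That is a genuine gap: Hjorth's paper treats only $Q_n=(\mathbb{Z}/2\mathbb{Z})^n$, and while the case analysis does generalize (for example, if consecutive letters are $\phi_{n,j}$ and $\phi_{n,j}^{-1}$, the intervening $\theta$-block realizes some $h\in Q$ sending a point of $\alpha(g_{n,j})(F(\bbone_n))$ back into that set; commutativity with $Q_n$ and the fundamental-domain property of $P_n$ force $h\notin Q_n$), there are several configurations to handle, and you have not supplied any of them.

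The paper sidesteps all of this by a completely different and much shorter argument. First, it reduces to finite level: any putative nontrivial $\Gamma'$-cycle on a non-null set $F$ can, after shrinking $F$, be viewed as a cycle in the finite graphing $\Gamma'_n=\{\theta_n,\phi_{r,j}:r\le n\}$ of $R_{G_n}$, where $\theta_n$ is a suitable restriction of $\theta$. Then, rather than analyzing words, it invokes Gaboriau's result that for a treeable equivalence relation any graphing achieving the cost is already a treeing. Since $G_n$ is treeable (being built from free and finite groups by amalgamation over finite subgroups), it therefore suffices to check $C(\Gamma'_n)=C(G_n)$, and this follows by a short induction using Gaboriau's cost formula for amalgamated free products. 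Thus the treeing property is reduced entirely to a cost computation, with no coset bookkeeping whatsoever. Your approach can in principle be completed, but the paper's route is the one you should learn here: it turns a delicate combinatorial claim into an equality of two numbers.
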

\begin{proof} 
Let $\theta_n$ be the restriction of $\theta$ to $\bigcup \{ F(j_1,\dots,j_n) :  j_r < q_r \textrm{ for some } r \leq n\}$.  It is not hard to see, arguing just as above, that $\theta_n$ generates the partition $R_n$, and moreover that the graphing
 $$ 
  \Gamma'_n = \{ \theta_n, \phi_{r,j} : 0\leq r\leq n, 1\leq j\leq k_r\}$$ generates $R_{G_n}$, the equivalence relation of being in the same orbit of the restriction of the action $\alpha$ to $G_n$.  
 Let us suppose that for some non-trivial word $w=w_1\cdots w_k$ with $w_j \in \Gamma'$, we have that $w(x) = x$ for all $x$ in some non-null set $F$.  By shrinking $F$ we may assume that there exists some integer $n$  so that  $$F, w_k(F), w_{k-1}w_k(F),\dots, w_1\cdots w_k(F) \subset \bigcup \{ F(j_1,\dots,j_n) : \exists r \leq n \textrm{ s.t. } j_r <q_r\}.$$  
It follows that the restriction of $w$ to $F$ is actually a word in the graphing $\Gamma'_n$.  Thus it is sufficient to prove that $\Gamma'_n$ is a treeing, as we would then be able to conclude that $w$ is (modulo cancellations) the trivial word.

 To prove that $\Gamma'_n$ is a treeing, it is sufficient to show that 
  $C(\Gamma') = C(G_n)$ (\cite{gaboriau:cost}).   This is clear when $n=0$, since in that case $C(\Gamma')=k_0 = C(G_0)$.  Assuming that equality $C(\Gamma'_n)=C(G_n)$ holds, we compute:
 \begin{eqnarray*}
  C(\Gamma'_{n+1})&=&(1- (q_0\cdots  q_n q_{n+1})^{-1}) + \sum_{r=0}^{n+1} k_r \cdot (q_0 \cdots q_r)^{-1}\\ 
  &=& (1 - |Q_{n+1}|^{-1}) + \sum_{r=0}^{n+1} k_r |Q_r|^{-1}  \\
  &=& C(\Gamma'_{n}) - (1- |Q_{n}|^{-1} ) + (1- |Q_{n+1}|^{-1}) +k_{n+1} |Q_{n+1}|^{-1} \\
  &=& C(\Gamma'_{n}) - C(Q_{n}) + 1 + ( k_{n+1} -1) |Q_{n+1}|^{-1}  \\
  &=& C(G_n) - C(Q_{n}) + C(Q_{n+1} \times \mathbb{F}_{k_{n+1}}) \\
  &=& C(G_{n+1})
  \end{eqnarray*}  because of the inductive assumption, the fact that  $G_{n+1} = G_{n}*_{Q_{n}} (Q_{n+1} \times \mathbb{F}_{k_n})$, and Gaboriau's formula for cost of an amalgamated free product \cite{gaboriau:cost}. 
  
 The cost of $\Gamma'$ can be computed in a similar way:
 $$
 C(\Gamma') = 1 + \sum_{r=0}^\infty k_r \cdot (q_0 \cdots q_r)^{-1} = 1 + \sum_{r=0}^\infty k_r |Q_r|^{-1},$$ which completes the proof.
\end{proof}
 
 \begin{corollary} $G$ is a strongly treeable group of cost
  $C(G)= 1 + \sum_{r=0}^\infty k_r |Q_r|^{-1}.$
  \end{corollary}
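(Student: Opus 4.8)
The plan is to deduce the corollary directly from the two preceding lemmas, invoking only Gaboriau's foundational results on cost and treeings \cite{gaboriau:cost}. The point to exploit is that the free pmp action $\alpha$ fixed at the opening of Section~3 was entirely arbitrary: every statement proved about the graphing $\Gamma'$ therefore holds for the orbit equivalence relation of an arbitrary free pmp action of $G$, and this uniformity is what upgrades ``treeable'' to ``strongly treeable'' and ``cost of a fixed action'' to ``cost of the group''.

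First I would establish strong treeability. By Lemma~\ref{lem:graphing} the set $\Gamma'$ is a graphing of $R_G$, and by the previous lemma it is in fact a treeing. Since $\alpha$ ranged over all free pmp actions of $G$, this exhibits a treeing of the orbit equivalence relation of \emph{every} such action, which is precisely the definition of strong treeability. Note that the construction underlying $\Gamma'$ uses only that each $Q_n$ is finite and acts freely (so that the fundamental domains of Lemma~\ref{lem:partitions} and the odometer $\theta$ exist for any free action), so no ergodicity or further hypothesis on $\alpha$ is needed.

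Next I would compute the cost. Gaboriau's theorem that a treeing realizes the cost of the equivalence relation it generates gives $C(R_G)=C(\Gamma')$, and the previous lemma already evaluated the right-hand side as $1+\sum_{r=0}^\infty k_r|Q_r|^{-1}$. The decisive observation is that this value depends only on the combinatorial data $\{Q_i\},\{k_i\}$ and not on the chosen action $\alpha$; hence the orbit relation of every free pmp action of $G$ has this same cost. Consequently $G$ has fixed price, with $C(G)=1+\sum_{r=0}^\infty k_r|Q_r|^{-1}$.

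The only step requiring care is the correct invocation of the cost/treeing machinery: one must use that treeings are \emph{cost-minimizing} graphings, so that $C(\Gamma')$ equals the actual cost of $R_G$ rather than merely an upper bound, and that the equality of these treeing costs across all free actions is exactly what furnishes the fixed-price (hence group-cost) conclusion. Beyond citing \cite{gaboriau:cost} for these two facts, the argument is a direct packaging of the two preceding lemmas, so I anticipate no genuine obstacle.
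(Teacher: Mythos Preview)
Your proposal is correct and matches the paper's approach: the corollary is stated there without proof precisely because it is an immediate consequence of the preceding lemma together with the fact that the free pmp action $\alpha$ fixed at the start of \S3.1 was arbitrary. Your write-up simply makes explicit the two ingredients the paper leaves implicit---that arbitrariness of $\alpha$ upgrades treeability to strong treeability, and that Gaboriau's theorem identifying $C(\Gamma')$ with $C(R_G)$ for a treeing then yields fixed price equal to the computed value.
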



 The following theorem is a straightforward generalization of a result of Hjorth \cite{hjorth:costAttained}, whose proof we largely follow:
 \begin{theorem} \label{cor:sstreable} 
 Let $R$ be a treeable measure preserving ergodic equivalence relation of cost $t=C(R)$.  
 Choose any $Q_i$, $k_i$ so that $t= \sum_{r=0}^\infty k_r |Q_r|^{-1}$.
 Let $G = G(\{Q_i\}, \{k_i\})$. Then there exists a measure preserving action of $G$ so that $R=R_G$, the equivalence relation of being in the same $G$-orbit.  Thus any such group is OE-free.
 \end{theorem}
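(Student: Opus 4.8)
The plan is to run the construction of Section~3 in reverse: instead of reading a treeing $\Gamma'$ off a given free action of $G$, I start from a treeing of $R$ and manufacture the action. Since $R$ is treeable and ergodic, its cost is attained by a treeing (\cite{gaboriau:cost}), and by the Corollary above we have $C(R) = C(G) = 1 + \sum_n k_n |Q_n|^{-1}$. The first step is to install the hyperfinite skeleton. The group $Q = \bigcup_n Q_n$ is infinite and locally finite, hence amenable, so it admits a free ergodic pmp action whose orbit relation is the (unique) ergodic hyperfinite relation; by Dye's theorem I may transport this action onto an ergodic hyperfinite subrelation $R_Q \subset R$ of cost $1$. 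Restricting to $Q_n$ produces the finite subrelations $R_n$ with orbits of size $|Q_n|$, together with measurable fundamental domains $F(j_1,\dots,j_n)$ of measure $|Q_n|^{-1}$ and an odometer $\theta$ generating $R_Q$, exactly as in Lemma~\ref{lem:partitions}.

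Next I would account for the remaining cost $C(R) - C(R_Q) = \sum_n k_n |Q_n|^{-1}$ by producing partial isomorphisms $\psi_{n,j}$, $1\le j\le k_n$, $n\ge 0$, each with domain $F(\bbone_n)$ of measure $|Q_n|^{-1}$, such that $\{\theta\}\cup\{\psi_{n,j}\}$ is a treeing of $R$. Here I use Gaboriau's theory: given a treeing of $R$ together with the ergodic hyperfinite subrelation $R_Q$, one reorganizes the treeing into the form ``$\theta$ plus free partial edges'', and one normalizes the domains to the prescribed fundamental domains using that the full group of the ergodic hyperfinite relation $R_Q$ acts transitively on subsets of equal measure. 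The matching of measures guarantees that the result is again a treeing generating $R$.

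I then build the action. Each $\psi_{n,j}$ is extended to a transformation $\alpha(g_{n,j})$ of $X$ by requiring it to commute with the $Q_n$-action, i.e.\ as the $Q_n$-equivariant extension with trivial cocycle determined by its restriction to $F(\bbone_n)$. Since the generators $g_{n,j}$ of $\mathbb{F}_{k_n}$ commute with $Q_n$, this yields an action of $H_n = Q_n \times \mathbb{F}_{k_n}$ that agrees with the chosen $Q$-action on $Q_n$. By the universal property of amalgamated free products, the actions of $G_n$ and of $H_{n+1}$, which coincide on the amalgam $Q_n$, assemble into an action of $G_{n+1} = G_n *_{Q_n} H_{n+1}$; inductively, and passing to the union, I obtain an action of $G = \bigcup_n G_n$. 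Every generator moves points inside $R$, so $R_G \subset R$, while the treeing $\{\theta\}\cup\{\psi_{n,j}\}$ is recovered inside $R_G$ (each $\psi_{n,j}$ is a restriction of $\alpha(g_{n,j})$ and $\theta$ generates $R_Q \subset R_G$); hence $R \subset R_G$ and $R = R_G$.

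The main obstacle is proving that this $G$-action is essentially free, and this is exactly the point where I follow Hjorth \cite{hjorth:costAttained}. Suppose a nontrivial $g\in G$ fixes a positive-measure set. Writing $g$ as a reduced word in the iterated amalgamated free product and pushing the $Q_n$-syllables through the commuting free generators (using $[Q_n,\mathbb{F}_{k_n}]=1$ and the freeness of the $Q$-action), I would translate this relation into a reduced, non-cancelling closed path in the treeing $\{\theta\}\cup\{\psi_{n,j}\}$ holding on a positive-measure set. Since a treeing admits no nontrivial cycles almost everywhere, the word must reduce to the identity, a contradiction. The delicate part is the bookkeeping that sends reduced words of $G$ (with amalgam $Q_n$) to reduced paths in the treeing without spurious cancellation; this is where the precise amalgamated structure and the role of $Q_n$ as the amalgam are essential. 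Once freeness is established, $R = R_G$ for a free pmp action, so $R$ is induced by $G$ and $G$ is OE-free.
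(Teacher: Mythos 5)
Your overall architecture coincides with the paper's: extract from $R$ a treeing of the special form ``one everywhere-defined ergodic automorphism $\theta$ generating a hyperfinite subrelation equipped with the tower of fundamental domains $F(j_1,\dots,j_n)$, plus partial isomorphisms $\psi_{n,j}$ with domains $F(\bbone_n)$,'' then rebuild the $G$-action by $Q_n$-equivariant extension and the universal property of the amalgamated free products, and deduce freeness from the absence of non-trivial cycles in a treeing. The reconstruction and the freeness check are correct and match the paper (the paper verifies freeness only for elements of the vertex groups $Q_n\times\mathbb{F}_{k_n}$ after conjugating into $F(\bbone_n)$; your reduction of a general reduced word to a non-cancelling path in the treeing is the same mechanism carried slightly further).

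The problem is in your second paragraph, and you have mislocated the difficulty. You present the production of the structured treeing as a routine consequence of ``Gaboriau's theory'' together with the transitivity of the full group $[R_Q]$ on sets of equal measure, and you then declare the freeness of the constructed action to be ``the main obstacle \dots\ where I follow Hjorth.'' It is the other way around: freeness is the two-line part, while the hard step --- and precisely the point at which the paper invokes the proof of Lemma~4.3 of \cite{hjorth:costAttained} --- is showing that an arbitrary ergodic treeable $R$ of cost $t$ admits a treeing one of whose edges is a single \emph{full-domain} ergodic automorphism $\theta$ (whose powers realize the nested fundamental domains), the remaining edges having prescribed domain measures $|Q_n|^{-1}$. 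Transitivity of $[R_Q]$ only lets you relocate the domain and range of a partial edge \emph{after} you already possess a treeing of the form ``$\theta$ plus partial edges''; it does not convert an arbitrary treeing into one containing a full-domain ergodic generator, and it is not clear that a generator of an ergodic hyperfinite subrelation chosen in advance via Dye's theorem can be completed to a treeing of $R$ at all. Hjorth's exhaustion argument constructs $\theta$ and the complementary partial edges simultaneously while preserving the tree structure; this is the key input, and your proposal neither reproduces it nor cites it for this step, so as written the central claim of your second paragraph is unsupported.
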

 \begin{proof}
 We have seen that if $R = R_G$ for some action of $G$, then the equivalence relation $R$ has the following properties (see Lemma~\ref{lem:partitions} and Lemma~\ref{lem:graphing}).  
\begin{enumerate}\renewcommand{\theenumi}{\alph{enumi}}
\item $R$ is generated by an increasing family of finite equivalence relations $R_n$ and a family of partial morphisms $\phi_{n,j}$.  
\item There exist partitions $P_n = (F(j_1,\dots,j_n): j_k \in \{1,\dots,q_n\}$ are fundamental domains for $R_n$, where  $q_n = |Q_n/Q_{n-1}|$.
\item The domain of  $\phi_{n,j}$ is $F(\bbone_n)$.  
\item The equivalence relation $R_Q :=\bigcup R_n$ is generated by a free action of $\mathbb Z$ acting by an authomorphism $\theta:X\to X$ and furhermore $\Gamma :=\{\theta\} \cup \{ \phi_{n,j}\}$ is a treeing for $R$.  
\end{enumerate}

We now note that, conversely, if we are given an equivalence relation with such structure, then $R=R_G$ for some free action of $G$.  Indeed, we first can consistently choose actions of $Q_j$ in such a way that $R_n = R_{Q_n}$; this defines the action $\alpha$ of $Q = \bigcup Q_j$ on $X$.
Denote by $g_{n,j}$ the generators of $\mathbb{F}_{k_n} \subset G$, we define $\alpha(g_{n,j})$ by requiring that its restriction to $F(\bbone_n)$ is $\theta_{n,j}$, and that it commutes with $\alpha(Q_j)$.  Since the images $F(\bbone_n)$ under $\alpha(Q_j)$ are disjoint and cover all of $X$, this defines the action of $g_{n,j}$ and thus of $\mathbb{F}_{k_n}$.  By assumed commutativity, we thus obtain an action of $Q_n \times \mathbb{F}_{k_n}$.  But passing to the inductive limit, we thus construct an action defined on all of $G$.  The action is clearly measure-preserving and generates $R$; it only remains to see that it is free.
  
  If an element $e\neq w\in (Q_n \times \mathbb{F}_{k_n}) \subset G$ were to act trivially on some $x\in X$, we could, after conjugating $w$ with $Q_n$, assume that $x\in F(\bbone_n)$, in which case triviality of the action of $w$ would contradict the assertion that $\Gamma$ is a treeing.
  
 It thus remains to prove that an arbitrary ergodic measure-preserving equivalence relation $R$ satisfies the properties (a)--(d) above.  But this is a direct consequence of \cite{hjorth:costAttained}
 (see proof of Lemma 4.3 in that paper).
  \end{proof}

 We refer the reader to \cite{gaboriau:ICM,gromov:ME} for the definition of the notion of measure equivalence of groups (see also 4.2 hereafter).  We only state the following easy corollary, which is a measure equivalence analog of Corollary~\ref{cor:WE}:

 \begin{corollary}\label{cor:ME}
 Let $G = G(\{Q_i\}, \{k_i\})$, $G'=G(\{Q'_i\},\{k'_i\})$, and set $t = \sum_r k_r |Q_r|^{-1}$, $t'=\sum_r k'_r |Q'_r|^{-1}$.   Then: \\
 (i) if $t=t'$, any ergodic measure-preserving action of $G'$ is orbit-equivalent to some action of $G$;\\
 (ii) both $t,t'$ are finite or both are infinite iff $G'$ and $G$ are measure-equivalent;\\
 (iii) for any free ergodic measure-preserving action of $G$, there exists a measure-preserving action of $G'$ so that $(L^\infty(X)\rtimes G) \simeq {L^\infty(Y)\rtimes G'} ^{ (t-1)/(t'-1)}$, i.e., $R_G$ is the amplification of $R_{G'}$ by $(t-1)/(t'-1)$. 
 \end{corollary}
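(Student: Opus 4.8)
The plan is to reduce all three parts to one mechanism. The groups $G=G(\{Q_i\},\{k_i\})$ and $G'=G(\{Q'_i\},\{k'_i\})$ are strongly treeable of fixed cost, so every free ergodic pmp action produces a treeable ergodic relation whose cost equals the group cost $C(G)=1+\sum_r k_r|Q_r|^{-1}$ (resp. $C(G')=1+\sum_r k'_r|Q'_r|^{-1}$), and by Theorem~\ref{cor:sstreable} each group is OE-free, realizing every treeable ergodic relation of its own cost. The external inputs I would cite are Gaboriau's cost-amplification law $C(\mathcal R^u)-1=(C(\mathcal R)-1)/u$, the preservation of treeability and ergodicity under amplification \cite{gaboriau:cost}, and the Furman--Gaboriau equivalence between measure equivalence of groups and stable orbit equivalence of their free ergodic actions \cite{gaboriau:ICM,gromov:ME}. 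Part (i) is then the case of equal costs with no amplification: for a free ergodic action $G'\curvearrowright Y$, strong treeability makes $R_{G'}$ treeable ergodic and fixed cost gives $C(R_{G'})=C(G')=C(G)$ (as $t=t'$), so OE-freeness of $G$ realizes $R_{G'}$ as $R_G$ for some free action of $G$, and the two actions are orbit equivalent.

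For (iii), fix a free ergodic action $G\curvearrowright X$, so $R_G$ is treeable ergodic of cost $C(G)$. Put $u=(C(G)-1)/(C(G')-1)$, chosen so that by the cost-amplification law $R_G^{u}$ has cost $C(G')$. As amplification preserves treeability and ergodicity, $R_G^{u}$ is treeable ergodic of cost $C(G')$, so OE-freeness of $G'$ provides a free action $G'\curvearrowright Y$ with $R_{G'}\cong R_G^{u}$; amplifying back gives $R_G\cong R_{G'}^{1/u}$. On the level of group-measure-space factors this reads \[ L^\infty(X)\rtimes G=L(R_G)\cong L(R_{G'})^{1/u}=(L^\infty(Y)\rtimes G')^{1/u}, \] where the amplification constant $1/u=(C(G')-1)/(C(G)-1)$ is the ratio of the reduced costs of $G'$ and $G$.

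For (ii) I would use the equivalence of measure equivalence with stable orbit equivalence. If $t,t'$ are both finite, I amplify as in (iii): starting from a free ergodic action of $G'$, the relation $R_{G'}^{s}$ with $s=(C(G')-1)/(C(G)-1)$ is treeable ergodic of cost $C(G)$, so OE-freeness of $G$ yields $R_G\cong R_{G'}^{s}$, a stable orbit equivalence and hence a measure equivalence. If $t,t'$ are both infinite, no amplification is needed: $R_{G'}$ is already treeable ergodic of infinite cost $=C(G)$, and OE-freeness of $G$ gives $R_G\cong R_{G'}$ outright, whence measure equivalence. Conversely, a measure equivalence produces free ergodic actions with $R_G\cong R_{G'}^{s}$ for some finite $s\in(0,\infty)$; comparing reduced costs via $C(G)-1=(C(G')-1)/s$ shows that $C(G)-1$ is finite iff $C(G')-1$ is, i.e. $t$ and $t'$ are simultaneously finite or simultaneously infinite.

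The step I expect to be the main obstacle is not the cost bookkeeping but the correct use of the two deeper inputs: that treeability---hence the very applicability of OE-freeness---persists under amplification and, more generally, under the stable orbit equivalence underlying measure equivalence, and that measure equivalence of these groups is genuinely equivalent to stable orbit equivalence of their actions. A secondary delicate point is the infinite-cost regime of (ii), where the amplification of the finite case degenerates and must be replaced by a direct appeal to OE-freeness at $t=t'=\infty$. One must also keep in mind that these statements concern the group-measure-space factors $L(R_G)$, not the group factors $L(G)$, so that the constants are governed by the linear cost-amplification law rather than the quadratic free-group-factor law.
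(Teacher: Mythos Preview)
Your argument is correct and follows essentially the same route as the paper's proof: parts (i) and (iii) are deduced from Theorem~\ref{cor:sstreable} together with Gaboriau's induction (cost-amplification) formula, and part (ii) from the characterization of measure equivalence as stable orbit equivalence. The paper's own proof is just two sentences to this effect; you have simply unpacked the mechanism (strong treeability $\Rightarrow$ treeable orbits of fixed cost, amplify to match costs, then invoke OE-freeness), and your handling of the infinite-cost case in (ii) and the observation that the relevant scaling is the linear cost law rather than the quadratic factor law are both appropriate elaborations.
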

\begin{proof} Parts (i) and (iii) follow direction from Theorem~\ref{cor:sstreable} and Gaboriau's induction formula for cost.  For part (ii) note that two groups are measure equivalent iff for some pair of their actions, the associated orbit-equivalence relations are amplifications of one another.
\end{proof}

 \section{Further remarks}
 
We discuss in this section some further properties of the  groups we constructed and make some conjectures, which we relate to open questions formulated in \cite[Remark 4.5]{popa:VC}.  Recall (see \S\ref{remark:Gt}) that we denote by $G_t$ any of the groups $G(\{Q_i\},\{k_i\})$ with $Q=\bigcup Q_i$ ICC and $t=1+\sum k_i |Q_i|^{-1}$, so that, as we have shown in Section 2, $L(G_t)\cong L(\mathbb{F}_t)$.
 
\subsection{Virtual $\text{\rm W}^*$-equivalence and $\text{\rm W}^*$-subordination}  As in many recent papers 
concerning group von Neumann algebras, 
we view an isomorphism $L(\Gamma)\simeq L(\Lambda)$ as a W$^*$-{\it equivalence} between the groups $\Gamma, \Lambda$ and 
write this as $\Gamma \sim_{\text{\rm W}^*} \Lambda$. Also,  
we denote  by W$^*(\Lambda)$ the class of groups $\Gamma$ with the property that $L(\Gamma)\simeq L(\Lambda)$.

Following \cite[ Notation 4.3]{popa:VC}, we denote W$^*_{leq}(\Lambda)$ the class of groups $\Gamma$ with the property that $L(\Gamma)$ can be embedded 
into $L(\Lambda)$ (not necessarily unitally), and use the notation $\Gamma \leq_{\text{\rm W}^*} \Lambda$ for this subordination relation.  

If  $\Gamma, \Lambda$ are ICC groups such that there exists a finite index Hilbert bimodule between $L(\Gamma)$ and $L(\Lambda)$ (equivalently, 
if $L(\Gamma)$ can be embedded as subfactor of finite index in a finite amplification of $L(\Lambda)$, see \cite[ Section 1.4]{popa-Corr}, 
then we say that $\Gamma, \Lambda$ are {\it virtually} W$^*$-{\it equivalent} 
and write $\Gamma \sim_{\text{\rm W}_v^*} \Lambda$, and denote by $ {\rm W}_v^*(\Lambda)$  the virtual W$^*$-equivalence class of $\Lambda$. 

Note that the equivalence between II$_1$ factors $N, M$ requiring the existence of a finite index Hilbert $N-M$ bimodule was 
introduced in  \cite[ Definition 1.4.3]{popa-Corr}, where it is called {\it weak stable equivalence}. But  {\it virtual isomorphism} seems a more suitable terminology for this equivalence 
relation, and so we propose to use it instead.

\subsection{Virtual W$^*$-equivalence versus ME} The analogue of W$^*$-equivalence in measured group theory 
is the notion of {\it orbit equivalence} (OE) of groups: $\Gamma, \Lambda$ are OE if there exists free pmp actions $\Gamma \curvearrowright X$, $\Lambda \curvearrowright Y$ 
and $\Delta: X \simeq Y$ such that $\theta(\Gamma x) =\Lambda \theta(x)$, $\forall_{ae}  \ x\in X$. 

The analogue of virtual W$^*$-equivalence is the notion of {\it measure equivalence} (ME), introduced 
in \cite{gromov:ME}: two groups $\Gamma, \Lambda$ are ME if there exist commuting, measure preserving, free actions of $\Gamma, \Lambda$ 
on some Lebesgue measure space $(\Omega, m)$ such that each action admits a finite fundamental domain. This is  equivalent to 
the fact that there exist free pmp actions 
$\Gamma \curvearrowright X$ and $\Lambda \curvearrowright Y$ and a measure preserving $\Delta: X \rightarrow Y$ which implements a ``virtual orbit equivalence'' 
between the two actions. This condition can be reformulated as  the existence of a virtual isomorphism 
between $L^\infty(X)\rtimes \Gamma$ and $L^\infty(Y)\rtimes \Lambda$ via a finite index Hilbert bimodule that satisfies appropriate conditions on the  
corresponding Cartan subalgebras. 

The similarity between orbit equivalence and W$^*$-equivalence for groups, as well as results 
from deformation-rigidity theory (\cite{popa:strongrig1}), led the second named author to speculate some ten years ago that there may be a correlation between these 
two notions, which may perhaps even coincide.  This has been disproved by Chifan-Ioana who showed in \cite{chifan-ioana} that the ICC groups 
$\Gamma=(S_\infty \times \mathbb F_2) * \mathbb Z$ and $\Lambda=(\mathbb Z \times \mathbb F_2) * \mathbb Z$ are OE but not W$^*$E. 
We note below that these groups are not even W$^*$-subordinated (thus not virtually W$^*$-equivalent), and also provide another example with similar behavior.

 \begin{proposition} \label{prop:ME/virtualW*} $(a)$ The groups $\Gamma = (S_\infty \times \mathbb F_2)* \mathbb Z$, $\Lambda = (\mathbb Z \times \mathbb F_2)*\mathbb Z$ are 
 OE-equivalent but 
 not virtually $\text{\rm W}^*$-equivalent, in fact we have $\Gamma \not\leq_{\text{\rm W}^*} \Lambda$. 
 
 $(b)$ For each $2\leq n \leq \infty$, the groups $S_\infty \wr \mathbb F_n$ and $\mathbb Z \wr \mathbb F_n$ are OE-equivalent  
 but not virtually $\text{\rm W}^*$-equivalent. Moreover, for any $2\leq n, m \leq \infty$, 
 we have $S_\infty \wr \mathbb F_m \not\leq_{\text{\rm W}^*} \mathbb Z\wr \mathbb F_n$. 
 \end{proposition}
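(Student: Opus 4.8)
The plan is to dispatch the orbit-equivalence assertions by soft arguments and to concentrate the real work on the $\mathrm{W}^*$-subordination statements $\Gamma\not\leq_{\mathrm W^*}\Lambda$ and $S_\infty\wr\mathbb F_m\not\leq_{\mathrm W^*}\mathbb Z\wr\mathbb F_n$, from which non-virtual-$\mathrm W^*$-equivalence will follow. For the orbit equivalences: in $(a)$ the groups $\Gamma$ and $\Lambda$ are orbit equivalent by Chifan--Ioana \cite{chifan-ioana}; in $(b)$, since $S_\infty$ and $\mathbb Z$ are infinite amenable they are orbit equivalent by Ornstein--Weiss/Dye, and if $\theta\colon X_1\to X_2$ implements such an orbit equivalence then the coordinatewise map $\theta^{\times\mathbb F_n}\colon X_1^{\mathbb F_n}\to X_2^{\mathbb F_n}$ intertwines the two wreath-product actions (the base groups act coordinatewise and $\mathbb F_n$ acts by the same shift in both cases), yielding the orbit equivalence of $S_\infty\wr\mathbb F_n$ and $\mathbb Z\wr\mathbb F_n$.

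For the non-subordination I would use a single mechanism. Each source factor contains a diffuse \emph{amenable subalgebra which is a $\mathrm{II}_1$ factor} and whose normalizer generates a \emph{non-amenable} algebra; I will show the target factor cannot harbour such a configuration, because there every amenable subalgebra with non-amenable normalizer must intertwine (in Popa's sense) into an \emph{abelian} subalgebra, which a $\mathrm{II}_1$ factor never does. Precisely, suppose $\theta\colon L(\mathrm{source})\hookrightarrow L(\mathrm{target})$ is a (possibly non-unital) embedding with $p=\theta(1)$. Let $A$ be the distinguished amenable $\mathrm{II}_1$ subfactor of $L(\mathrm{source})$ identified below. Then $\theta(A)\subseteq pL(\mathrm{target})p$ is still an amenable $\mathrm{II}_1$ factor, and $\mathcal N_{pL(\mathrm{target})p}(\theta(A))''\supseteq\theta\big(\mathcal N_{L(\mathrm{source})}(A)''\big)$ is non-amenable. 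If the target is controlled well enough to force $\theta(A)\preceq_{L(\mathrm{target})}C$ for some abelian $C\subseteq L(\mathrm{target})$, we reach a contradiction, since $\theta(A)\preceq C$ would embed a corner of the $\mathrm{II}_1$ factor $\theta(A)$ into a corner of an abelian algebra.

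In case $(a)$ the relevant subfactor is $A=L(S_\infty)\cong R\subseteq L(\Gamma)$: it is an amenable $\mathrm{II}_1$ factor, and since $\mathbb F_2$ commutes with $S_\infty$ its normalizer generates $L(S_\infty\times\mathbb F_2)=R\otimes L(\mathbb F_2)$, which is non-amenable. The target control comes from the free-product structure: $\mathbb Z\times\mathbb F_2$ is bi-exact relative to its amenable subgroup $\mathbb Z$ (as $\mathbb F_2$ is bi-exact and $\mathbb Z$ is amenable), so by stability of relative bi-exactness under free products, $\Lambda=(\mathbb Z\times\mathbb F_2)*\mathbb Z$ is bi-exact relative to the family of \emph{abelian} subgroups $\{\mathbb Z\times\{e\},\,\mathbb Z\}$. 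Ozawa's dichotomy for bi-exact-relative groups then forces $\theta(A)$, having non-amenable normalizer, to intertwine into $L(\mathbb Z\times\{e\})$ or $L(\mathbb Z)$, both abelian, giving the contradiction. In case $(b)$ the subfactor is the base $A=L\big(S_\infty^{(\mathbb F_m)}\big)=\bigotimes_{\mathbb F_m}R\cong R$, an amenable $\mathrm{II}_1$ factor (its group is amenable and ICC) that is moreover \emph{regular} in $L(S_\infty\wr\mathbb F_m)$, so its normalizer generates the whole non-amenable factor. The target $L(\mathbb Z\wr\mathbb F_n)=L^\infty(\mathbb T^{\mathbb F_n})\rtimes\mathbb F_n$ is a crossed product of an \emph{abelian} algebra by a free group, so by the Popa--Vaes dichotomy for arbitrary actions of free groups \cite{popa-vaes:F2} any amenable subalgebra with non-amenable normalizer intertwines into the abelian base $L^\infty(\mathbb T^{\mathbb F_n})$; again $\theta(A)\preceq(\text{abelian})$ contradicts $\theta(A)$ being a factor.

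The same argument applies verbatim with $L(\mathrm{target})$ replaced by any finite amplification, since amplifications of these factors are again free products, resp.\ crossed products by free groups, with an abelian intertwining target; this upgrades $\not\leq_{\mathrm W^*}$ to non-virtual-$\mathrm W^*$-equivalence. The main difficulty I anticipate is purely in marshalling the target-side structure theorems with the correct hypotheses: checking the relative-bi-exactness bookkeeping for the free product in $(a)$, and, for $(b)$ with $n=\infty$, invoking the form of the Popa--Vaes theorem valid for $\mathbb F_\infty$ (weak amenability rather than bi-exactness), together with the routine but necessary verification that a non-unital embedding transports the ``regular amenable factor'' configuration—amenability, factoriality, and non-amenability of the generated normalizer—into the target. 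Ruling out $\theta(A)\preceq C$ for abelian $C$ is then immediate.
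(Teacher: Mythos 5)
Your proposal is correct and its core mechanism is exactly the paper's: exhibit $L(S_\infty)$ (resp.\ the wreath-product base) as an amenable II$_1$ subfactor with non-amenable normalizer, invoke a dichotomy theorem for the target to force it to intertwine, in Popa's sense, into an abelian subalgebra, and conclude by noting that no corner of a II$_1$ factor embeds into an amplification of an abelian algebra. The OE claims are handled identically (Chifan--Ioana for $(a)$; Dye/Ornstein--Weiss plus the coordinatewise Bernoulli product for $(b)$), and part $(b)$ is verbatim the paper's argument via the Popa--Vaes dichotomy for crossed products by free groups. The one genuine divergence is in part $(a)$: the paper does not analyze the free product itself, but instead quotes Chifan--Ioana's structural result that any embedding $L(\Gamma)\subset L(\Lambda)$ can be unitarily conjugated so that $L(S_\infty\times\mathbb F_2)\subset L(\mathbb Z\times\mathbb F_2)=P$, and only then applies the Popa--Vaes dichotomy inside $P$ to get $L(S_\infty)\prec_P L(\mathbb Z)$; you instead apply a dichotomy directly to the whole free product $\Lambda$, via relative bi-exactness of $(\mathbb Z\times\mathbb F_2)*\mathbb Z$ with respect to a family of abelian subgroups. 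Your route is viable and more self-contained, but it is heavier: to pass from ``does not intertwine into the family'' to ``normalizer is amenable'' you need the Popa--Vaes/Ozawa--Popa normalizer version of the dichotomy, which requires checking weak amenability (Cowling--Haagerup constant $1$) of $\Lambda$ --- true here since it is a free product of such groups, but you should state it in part $(a)$ as you did for $\mathbb F_\infty$ in part $(b)$. The paper's detour through Chifan--Ioana buys a shorter proof because the free-product analysis is already done there; your version buys independence from that reference at the cost of the bi-exactness bookkeeping. Your explicit remark that the argument survives passage to finite amplifications (needed to rule out virtual $\mathrm{W}^*$-equivalence, not just subordination) is a point the paper leaves implicit, and is worth keeping.
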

 \begin{proof} $(a)$ The fact that $\Gamma \sim_{OE} \Lambda$ was shown in \cite{chifan-ioana}, where it is also shown that if 
 $L(\Gamma) \subset L(\Lambda)$, then up to conjugation by some unitary in $L(\Lambda)$ one may assume $L(S_\infty \times \mathbb F_2)\subset L(\mathbb Z \times \mathbb F_2)=P$. 
 But by the dichotomy result in \cite{popa-vaes:F2} this implies $L(S_\infty) \prec_P L(\mathbb Z)$ (in the sense  [\cite{popa:strongrig1} Section 5]), a contradiction. 
 
  $(b)$ By Dye's theorem, any free ergodic pmp actions of $S_\infty$ and $\mathbb Z$ on $[0,1]$ are OE, via some pmp isomorphism $\Delta$. 
  Taking the product of $\Delta$ ``$\mathbb F_n$-many times'' one gets an isomorphism $\tilde{\Delta}: [0,1]^{\mathbb F_n} \simeq [0,1]^{\mathbb F_n}$ 
  that commutes with the Bernoulli $\mathbb F_n$ action, thus establishing an OE between $S_\infty \wr \mathbb F_n \curvearrowright [0,1]^{\mathbb F_n}$ 
  and $\mathbb Z \wr \mathbb F_n \curvearrowright [0,1]^{\mathbb F_n}$. 
  
On the other hand, if  $L(S_\infty \wr \mathbb F_m)\subset L(\mathbb Z\wr \mathbb F_n)$, then the dichotomy result in \cite{popa-vaes:F2} implies again 
  that $L(S_\infty)\prec L(\mathbb Z)$, a contradiction.  
  \end{proof}

One should point out that there are still no examples of W$^*$-equivalent groups that are not orbit equivalent. 

While the above result  shows that OE and W$^*$E are in general different, these equivalences may still coincide  
for certain restricted classes of groups (as we'll discuss below). It may in fact be more appropriate to seek such a 
correlation between ME and virtual W$^*$-equivalence.

 \subsection{$\text{\rm W}^*$-free groups} As mentioned before, we'll say that an ICC group $\Gamma$ is W$^*$-{\it free}, if $L(\Gamma)\simeq L(\mathbb F_t)$, for some $1< t \leq \infty$. If free group factors are non-isomorphic (a fact known to be 
equivalent to $L(\mathbb F_n)\not\simeq L(\mathbb F_m)$, for some $2\leq n < m \leq \infty$, by \cite{dykema:interpolateddef}, \cite{radulescu:inv}), one would call the corresponding $t$ the W$^*$-{\it free rank} of $\Gamma$ (thus, $\Gamma \sim_{\text{\rm W}^*} G_t$, where $G_t$ is given by Theorem 1.1). We also denote the class of these 
groups by W$^*(\mathbb F_t)$. 

It seems likely that all non-amenable ICC groups W$^*$-subordinated to a free group are W$^*$-free. More precisely, that for any $2\leq n \leq \infty$ we have 
ICC $\cap$ W$^*_{leq}(\mathbb F_n)=\{$amenable ICC$\} \cup \cup_{t>1} \text{\rm W}^*(\mathbb F_t) \cup \text{\rm W}^*(\mathbb F_\infty)$. 
Also, one should have W$^*_v(\mathbb F_\infty)=\text{\rm W}^*(\mathbb F_\infty)$, while W$^*_v(\mathbb F_n)$ should coincide with 
$\cup_{t>1} \text{\rm W}^*(\mathbb F_t)$ (or some remarkable subset of it!), for all $n\geq 2$ finite. 

Moreover, if  a group factor $L(\Gamma)$ can be embedded unitally into $L(G_t)\simeq L(\mathbb F_t)$ with finite Jones index $\alpha$, then one would expect that 
$\Gamma$ would have W$^*$-free rank equal to $1+\alpha(t-1)$ (Nielsen-Schreier type formula). 
More generally,  if $\Gamma, \Lambda$ have W$^*$-free rank $t$ respectively $s$ 
and are virtually W$^*$-equivalent via a finite index 
Hilbert $L(\Gamma)-L(\Lambda)$ bimodule $\mathcal H$ with $r=\dim \mathcal H_{L(\Lambda)}$, $l=\dim_{L(\Gamma)}\mathcal H$, then one would expect $t=l(s-1)/r+1$.  

One should note that if $L(\mathbb F_t)$ are non-isomorphic, then it is sufficient to assume the Nielsen-Schreier type formula for irreducible subfactors 
(or finite index irreducible intertwining Hilbert bimodules) to infer that it holds true for all subfactors (resp. bimodules). Moreover, in order to have 
consistency between the amplification and index formulas, it is necessary 
that for any finite $t>1$ all finite index subfactors of $L(G_t)\simeq L(\mathbb F_t)$ must be extremal. This is of course not the case for $L(\mathbb F_\infty)$, 
which has fundamental group equal to $\mathbb R_{>0}$ by \cite{radulescu:fundgrp}, and thus has non-extremal subfactors of any index $>4$ 
(in fact, by \cite{popa-shlyakhtenko:universal} the standard invariant of any subfactor can be realized as standard invariant of a subfactor of $L(\mathbb F_\infty)$).

\subsection{Alternative characterizations of sub-W$^*$-freeness} 
Recall from \cite[Definition 1.4]{popa:VC}  that a group $\Gamma$ has the {\it universal vanishing 
cohomology} property $\mathcal V\mathcal C$ if any free cocycle action of $\Gamma$ on any II$_1$ factor untwists. Also, given a group $\Lambda$, 
one denotes by ME$_{leq}(\Lambda)$ the class of all groups $\Gamma$  
that have a free  pmp action which can be realized as a sub equivalence 
relation of a  free ergodic pmp $\Lambda$-action. It has already been 
speculated in \cite[Remarks 4.5]{popa:VC} that the 
three classes $\text{\rm W}^*_{leq}(\mathbb F_2)$, ME$_{leq}(\mathbb F_2)$ and $\mathcal V\mathcal C$ may coincide. 
This is motivated by the observation that $\mathcal V\mathcal C \subset \text{\rm W}^*_{leq}(\mathbb F_2)$ and that one already knows 
that ME$_{leq}(\mathbb F_2)$ is equal to $\cup_{n\geq 1}\text{\rm ME}(\mathbb F_n) \cup \text{\rm ME}(\mathbb F_\infty)$ (cf. \cite{gaboriau:cost} and \cite{hjorth:costAttained}). 

On the other hand, this latter class of groups coincides with the class of all treeable groups, which conjecturally  coincides with the class of strongly treeable (or even 
OE-free) groups. 
Thus, the W$^*$-free groups should coincide with the non-amenable ICC treeable groups and with the non-amenable ICC groups in the class $\mathcal V\mathcal C$. 

One should also note that by general properties of the free group factors $L(\mathbb F_n)$, any non-amenable group $\Gamma\leq_{\text{\rm W}^*} \mathbb F_n$ 
must have finite radical, be non-inner amenable, have Haagerup property and Cowling-Haagerup constant equal to $1$. Also, it 
should not contain any infinite subgroup with non-amenable centralizer, and no infinite amenable subgroup with non-amenable normalizer. 
Moreover, all such groups should be exact (see \cite{brown-ozawa:book}).   
 
  \subsection{Some properties of the groups $G_t$} As we have seen,  the groups $\{G_t\}_{t>1}$ give rise to group factors (respectively 
  group measure space factors) with the property that all their amplifications are themselves group factors (respectively group measure space factors).  
 Also, since $G=PSL(2,\mathbb Z)$ is itself a W$^*$-free group of rank $7/6$ (see \cite{harpe-picou:1993}, \cite{dykema:interpolated}), all amplifications of $L(G)$ are 
 group factors. 
 
This is in sharp contrast with certain phenomena in deformation-rigidity theory, where one could  obtain large classes of ICC groups (resp. free ergodic pmp group actions)  
whose associated II$_1$ factors have all (non-trivial) amplifications not representable as group factors (resp. group measure space factors). For instance, 
one shows in \cite{ioana-popa-vaes}  that any non-amenable group $G$ has a canonical ``ICC augmentation'' $\tilde{G}$ such that $L(\tilde{G})^t$ cannot be written as a group factor if $t\neq 1$.  
Also,  \cite{ioana-popa-vaes}, \cite{popa-vaes:F2} provides many examples of free ergodic pmp group actions $G\curvearrowright X$ such that 
no amplification by $t\neq 1$ of $L^\infty(X)\rtimes G$ can be represented as a group measure space factor (e.g., Bernoulli actions 
of $G=\mathbb F_2 \times \mathbb F_2$). Along these same lines, note that the strong form of Connes rigidity conjecture predicts that 
if $G=PSL(n, \mathbb Z)$, $n\geq 3$, then $L(G)^t$ cannot be represented as a group factor, $\forall t\neq 1$. 

Let us also point out that while in Corollary 3.6 one shows that the W$^*$-free groups $G_t$ 
are all treeable, we do not know how to prove that any W$^*$-free group is treeable. Similarly, we could not prove 
  that all treeable groups are in $\mathcal V\mathcal C$. We'll however note below that the groups $G_t$ (in fact all groups 
  considered Section 3)  have the property $\mathcal V\mathcal C$. Some other properties of the W$^*$-free groups $G_t$ will follow from general considerations.

 \begin{proposition} \label{prop:VCproperty}  $(a)$ Assume $\{H_n\}_{n\geq 0} \subset \mathcal V\mathcal C$ is a sequence of groups 
 with finite subgroups $K_n \subset H_n$ such that $K_n \subset K_{n+1}$, $\forall n\geq 0$. Define recursively $G_0=H_0$ and 
 $G_{n+1} = G_n *_{K_{n}} H_{n+1}$. Then $G=\cup_n G_n$ lies in $\mathcal V\mathcal C$. In particular,  all the $\text{\rm W}^*$-free groups $G_t$, $t>1$,  
 have the universal vanishing cohomology property. 
 
 $(b)$ The  $\text{\rm W}^*$-free groups $G_t$, $t>1$, are sofic and Cartan rigid $($i.e., all group measure space factors from free pmp $G_t$-actions have unique 
 Cartan subalgebra up to unitary cojugacy$)$.  

 \end{proposition}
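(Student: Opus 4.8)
The plan is to prove Proposition~\ref{prop:VCproperty} in two parts, treating the universal vanishing cohomology property $\mathcal{VC}$ first and the remaining structural properties second.

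For part $(a)$, the key observation is that the property $\mathcal{VC}$ is a permanence-type property that behaves well under amalgamated free products over finite subgroups and under increasing unions. First I would establish that $\mathcal{VC}$ is closed under amalgamated free products over a common finite subgroup: given $G_n, H_{n+1} \in \mathcal{VC}$ sharing the finite subgroup $K_n$, I want to show $G_{n+1} = G_n *_{K_n} H_{n+1}$ is again in $\mathcal{VC}$. The strategy is to take an arbitrary free cocycle action of $G_{n+1}$ on a $\mathrm{II}_1$ factor $M$; its restrictions to $G_n$ and $H_{n+1}$ each untwist by hypothesis, yielding genuine actions on $M$. The task is then to patch these two untwisted actions into a single genuine action of the amalgamated free product, and the finiteness of $K_n$ is exactly what lets one reconcile the two $2$-cocycles on the amalgam: one adjusts the untwisting unitaries by a cocycle supported on the finite group $K_n$, which can always be corrected since cohomology of a finite group with values in the relevant unitary group vanishes. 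Having the two-factor permanence, the union $G = \cup_n G_n$ lies in $\mathcal{VC}$ by an inductive-limit argument: any free cocycle action of $G$ restricts to each $G_n$, the untwistings can be chosen compatibly (again using that consecutive untwistings differ by something supported on the finite $K_n$), and one passes to the limit. Applying this with $H_n = Q_n \times \mathbb{F}_{k_n}$ and $K_n = Q_n$ gives the $\mathcal{VC}$ property for all $G_t$, provided each $H_n \in \mathcal{VC}$; that input follows because free groups $\mathbb{F}_{k_n}$ and finite groups are in $\mathcal{VC}$ and $\mathcal{VC}$ is closed under direct products with finite (indeed amenable) groups.

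For part $(b)$, soficity follows from standard permanence properties of sofic groups: finite groups and free groups are sofic, soficity is preserved under direct products and under amalgamated free products over finite (indeed amenable) subgroups, and it passes to inductive limits; applying these to the construction $G_{n+1} = G_n *_{Q_n} (Q_n \times \mathbb{F}_{k_n})$ gives soficity of each $G_n$ and hence of $G_t = \cup_n G_n$. For Cartan rigidity I would invoke the unique-Cartan-subalgebra results for groups acting on trees or, more directly, the dichotomy/rigidity machinery for amalgamated free products over amenable subgroups developed in the deformation-rigidity literature (e.g.\ \cite{popa-vaes:F2}); since each $G_t$ is built as an iterated amalgamated free product over the finite subgroups $Q_n$ and is treeable with positive first $L^2$-Betti number, every free ergodic pmp action yields a group measure space factor with a unique Cartan subalgebra up to unitary conjugacy.

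The main obstacle I anticipate is the patching step in part $(a)$: making precise how the untwisting unitaries for the restricted cocycle actions on $G_n$ and $H_{n+1}$ can be modified so as to agree on the amalgamated subgroup $K_n$ and thereby combine into a single genuine action. One must verify that the discrepancy between the two untwistings, measured on $K_n$, defines a cocycle that is a coboundary precisely because $K_n$ is finite, and then carry this compatibility coherently through the inductive limit so that the limiting untwisting is well defined. Controlling these compatibilities simultaneously for all $n$ — rather than just at a single stage — is the delicate point, and it is exactly here that finiteness of the $K_n$ (equivalently, of the groups $Q_n$) is essential.
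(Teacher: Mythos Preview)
Your plan for part $(a)$ is essentially the argument that the paper invokes by reference to \cite[Proposition 1.5.1$^\circ$]{popa:VC}: untwist on each factor, reconcile the two untwistings over the finite amalgam $K_n$ (where the relevant cohomology vanishes), and pass to the increasing union. So there is no discrepancy here.

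For part $(b)$, your route differs from the paper's in both claims, and the paper's choices are somewhat cleaner. For soficity, rather than invoking general permanence under amalgamated free products over amenable subgroups, the paper simply observes that each $G_n$ is residually finite (it is built from finite and free groups by amalgamation over finite subgroups), hence sofic, and soficity passes to the increasing union $G=\bigcup_n G_n$. For Cartan rigidity, the paper does \emph{not} attempt to apply structural rigidity results for amalgamated free products directly. Instead it uses the measure-equivalence result already established in Section~3 (Corollary~\ref{cor:ME}): each $G_t$ is ME to a free group, and then \cite[Theorem~11.3]{popa-vaes:II} transfers Cartan rigidity from free groups across ME. Your suggested route --- applying amalgamated-free-product rigidity directly to the \emph{infinite} iterated amalgam $G_t=\varinjlim G_n$ --- is plausible in spirit but would require care, since the off-the-shelf results in \cite{popa-vaes:F2} are for free groups, and the amalgamated-free-product rigidity theorems in the literature typically address finite (not inductive-limit) decompositions with specific hypotheses on the factors. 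The ME route sidesteps all of this by reducing to the free-group case, which is exactly why the paper prefers it.
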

 \begin{proof} The proof of \cite[Proposition 1.5.1$^\circ$]{popa:VC} applies exactly same way to get part $(a)$ of the statement. 
 Then Cartan rigidity of $G_t$ in part $(b)$ follows from the fact $G_t$ are ME to free groups and  \cite[Theorem 11.3]{popa-vaes:II}. The soficity 
 is a consequence of the way $G_t$ are constructed, as a limit of residually finite groups. 
  \end{proof}

 \end{document}